\documentclass[12pt]{article}

\usepackage[letterpaper, margin=1.7cm]{geometry}

\usepackage{amsmath, amsfonts, amssymb, amsthm}
\usepackage{setspace}
\usepackage{commath}
\usepackage{mathtools}
\usepackage{bbm}
\usepackage{bm}
\usepackage{subfig} 
\usepackage{color}
\usepackage[normalem]{ulem}
\usepackage{kbordermatrix}
\usepackage{mathrsfs}
\usepackage{overpic}
\usepackage{cleveref}
\usepackage{enumerate}
\usepackage[singlelinecheck=false]{caption}

\definecolor{DarkGreen}{rgb}{0.2,0.6,0.2}

\def\red#1{\textcolor{black}{#1}}

\numberwithin{equation}{section}

\def\ignore#1{}

\def\<{\langle}\def\>{\rangle}
\def\Ind#1{{\mathbbmss 1}_{_{\scriptstyle #1}}}

\newtheorem{theorem}{Theorem}[section]
\newtheorem{proposition}[theorem]{Proposition}
\newtheorem{corollary}[theorem]{Corollary}
\newtheorem{lemma}[theorem]{Lemma}

\theoremstyle{definition}
 
\newtheorem{example}[theorem]{Example} 
\newtheorem{remark}[theorem]{Remark}

\begin{document}
\title{On the $p^{\text{th}}$ variation of a class of fractal functions}
\author{ Alexander Schied\thanks{Department of Statistics and Actuarial Science, University of Waterloo. E-mail: {\tt aschied@uwaterloo.ca}}
	 \and\setcounter{footnote}{6}
	Zhenyuan Zhang\thanks{
		Department of Statistics and Actuarial Science, University of Waterloo. E-mail: {\tt z569zhan@edu.uwaterloo.ca}
	 		\hfill\break The authors gratefully acknowledge financial support  from the
 Natural Sciences and Engineering Research Council of Canada through grant RGPIN-2017-04054}}
        
        \date{\normalsize First version: June 29, 2019\\
        this version: April 27, 2020    }
\vspace{-2cm}
\maketitle

\vspace{-1cm}

\begin{abstract}The concept of the $p^{\text{th}}$ variation of a continuous function $f$ along a refining sequence of partitions is the key to a pathwise It\^o integration theory with integrator $f$. Here, we analyze the $p^{\text{th}}$ variation of a class of fractal functions,  containing both the Takagi--van der Waerden and Weierstra\ss\ functions. We use a probabilistic argument to show that these functions have linear $p^{\text{th}}$ variation for a parameter $p\ge1$, which can be interpreted as the reciprocal Hurst parameter of the function. It is shown moreover that if functions are constructed from  (a skewed version of) the tent map, then the slope of the $p^{\text{th}}$ variation can be computed from the $p^{\text{th}}$ moment of a (non-symmetric) infinite Bernoulli convolution. Finally, we provide a recursive formula of these moments and use it to discuss the existence and non-existence of a signed version of the $p^{\text{th}}$ variation, which occurs in pathwise It\^o calculus when $p\ge3$ is an odd integer.
 \end{abstract}
 
\noindent{\it Key words:} $p^{\text{th}}$ variation, Weierstra\ss\ function, Takagi-van der Waerden functions, pathwise It\^o calculus, (non-symmetric) infinite Bernoulli convolution and its moments

\medskip

\noindent{\it MSC 2010:} 60H05, 28A80,  26A45,  60E05

\section{Introduction}

Many random phenomena require a description by trajectories that are rougher (or smoother) than the sample paths of continuous semimartingales. A showcase example is the recent observation by Gatheral et al.~\cite{GatheralRosenbaum} that the realized  volatility of stocks and stock price indices is typically \lq\lq rough\rq\rq. To measure the degree of roughness of a function $f:[0,1]\to\mathbb{R}$, Gatheral et al.~\cite{GatheralRosenbaum} and others study expressions of the form
\begin{equation}\label{finite var sum}
\sum\big|f(t_{i+1})-f(t_i)\big|^p
\end{equation}
where the time points $t_i$ form a partition of $[0,1]$ and $p\ge1$ is a parameter. The intuition is that, if the mesh of the partition tends to zero, then there exists a number $q\in[1,\infty]$ such that the sums in~\eqref{finite var sum} will diverge for $p<q$ and converge to zero for $p>q$. This number $q$ can be regarded as the reciprocal of the Hurst parameter of $f$. Clearly, for a typical continuous semimartingale, the corresponding parameter $q$ would be equal to 2, whereas values larger than 6, or even larger than 10, are observed in~\cite{GatheralRosenbaum} for realized volatility trajectories. This observation has spawned a large amount of work on stochastic models for rough volatility. Typically, such models  rely on fractional Brownian motion or fractional Ornstein--Uhlenbeck processes in describing the rough volatility processes and on rough paths integration theory for the mathematical analysis.

Another recent development concerns a strictly pathwise  It\^o integration theory for \lq\lq rough\rq\rq\ integrators. For the case of quadratic variation (i.e., $q=2$, but simultaneously for  $q<2$), this theory goes back to F\"ollmer~\cite{FoellmerIto}. For $q>2$, such a theory was recently developed by Cont and Perkowski~\cite{ContPerkowski};  see also Gradinaru et al.~\cite{GradinaruEtAl} and Errami and Russo~\cite{ErramiRusso} for related earlier work.  It is technically  easier than standard rough paths calculus and can still be used to provide a path-by-path analysis of many situations in which  stochastic calculus is normally used; see, e.g.,~\cite{FoellmerSchiedBernoulli,SchiedSpeiserVoloshchenko} and the references therein for several case studies for the case of quadratic variation ($q=2$).
The use of pathwise It\^o integration also has the advantage that it is not dependent on probabilistic model assumptions and thus is inherently robust with respect to model risk.
This latter point is perhaps particularly important for  situations with $q>2$, because there  much fewer models are available than in the quadratic variation case. 

In this note, our goal is to establish that all functions in a well-studied class of fractal functions have  linear $p^{\text{th}}$ variation  on $[0,1]$, and thus to establish these functions as valid integrators in the pathwise It\^o calculus developed in~\cite{ContPerkowski}. This class of fractal functions contains  in particular  the classical Takagi--van der Waerden and Weierstra\ss\ functions. In doing so, we continue the work in~\cite{MishuraSchied2}, where  linear $p^{\text{th}}$ variation was established for  the special class of Takagi--Landsberg functions. One of our ultimate goals is to provide a class of possible models for \lq\lq rough\rq\rq\ trajectories that allow for an application of It\^o calculus and that are not bound by the restrictive assumption of Gaussianity. 

Our first main result, Theorem~\ref{thm}, shows first that a fractal function $f$ is either of bounded variation or have non-trivial  linear $q^{\text{th}}$ variation, where $q$ is computed from the parameterization of $f$. Moreover, the slope of the  $q^{\text{th}}$ variation is identified as the $q^{\text{th}}$ absolute moment of a certain random variable $Z$, which in case of the Takagi--van der Waerden functions with even $b$ has the law of an infinite Bernoulli convolution. Then we turn to a skewed version $\phi$ of the tent map and investigate a signed version of the $q^{\text{th}}$ variation, which would arise in pathwise It\^o calculus if $q\ge3$ is an  odd integer. In Theorem~\ref{signed var thm} we show that this signed $q^{\text{th}}$ variation may or may not exist, but that it will never vanish as long as $\phi$ is genuinely skewed. The proof of Theorem~\ref{signed var thm}  is based on some auxiliary results on the moments of a general non-symmetric infinite Bernoulli convolution, which may be of independent interest.

In Section~\ref{results section}, we present and prove our general results.  In Section~\ref{example section}, we discuss several explicit examples, which include the classical Weierstra\ss\ function and the Takagi--van der Waerden functions. Then we discuss the existence and nonexistence of the signed $p^{\text{th}}$ variation for a class of functions based on a skewed version of the tent map.

\section{General results}\label{results section}
We consider a base function $\phi:\mathbb R\to\mathbb R$ that is periodic with period 1, Lipschitz continuous, and vanishes on $\mathbb{Z}$. 
Our aim is to study the function
\begin{align}
f(t):=\sum_{m=0}^\infty \alpha^m\phi(b^mt),\qquad t\in[0,1],\label{vdw}
\end{align}
where  $b\in\{2,3,\dots\}$ and $\alpha\in (-1,1)$. We exclude the trivial case $\alpha=0$. In this case, the series on the right-hand side converges absolutely and uniformly in $t\in[0,1]$, so that $f$ is indeed a well defined continuous function. If 
$\phi(t)=\sin(2\pi t)$, then  $f$ is a Weierstra\ss\ function. For the tent map, $\phi(t)=\min_{z\in\mathbb{Z}}|t-z|$, the function $f$ belongs to the class of Takagi-van der Waerden functions. For instance, the classical Takagi function~\cite{Takagi} has the parameters $b=2$ and $\alpha=1/2$. Also the case of a general base function $\phi$ is well studied; see, e.g., the survey~\cite{BaranskiSurvey} and the references therein.

Here, we analyze the $p^{\text{th}}$ variation of the function $f$ along the sequence 
\begin{equation}\label{b-adic partitions}
\mathbb{T}_n:=\{kb^{-n}:k=0,\dots,b^n\},\quad n\in\mathbb{N},
\end{equation}
  of $b$-adic partitions of $[0,1]$ for $p\ge1$. Recall that a function $f\in C[0,1]$  admits the \emph{continuous $p^{\text{th}}$ variation $\<f\>_t^{(p)}$ along the sequence $(\mathbb T_n)$}, if for each $t\in[0,1]$,
  \begin{equation}\label{pth var eq}
  \<f\>_t^{(p)}:=\lim_{n\uparrow\infty}\sum_{k=0}^{\lfloor tb^n\rfloor}\big|f((k+1)b^{-n})-f(kb^{-n})\big|^p
  \end{equation}
  exists\footnote{In the sum on the right-hand side of \eqref{pth var eq}, the function $f$ will be formally evaluated at $1+b^{-n}>1$ if $t=1$ and $k=b^n$. To deal with this situation, we will assume here and in the sequel that all functions $f$ defined on $[0,1]$ will be extended to $[0,\infty)$ by putting $f(t):=f(\min\{t,1\})$.} and the function $t\mapsto \<f\>_t^{(p)}$ is continuous (see, e.g., Lemma 1.3 in~\cite{ContPerkowski}). According to F\"ollmer~\cite{FoellmerIto} in the case $p=2$ (and simultaneously for any $p<2$) and Cont and Perkowski~\cite{ContPerkowski} in the case of general even $p$ (and hence for any finite $p$), this notion of $p^{\text{th}}$ variation along a refining sequence of partitions is the key to  pathwise It\^o integration with integrator $f$. Note that  for $p>1$ the notion of $p^{\text{th}}$ variation is
different from the alternative concept of \emph{finite $p$-variation} defined in analogy to the total variation by means of a supremum taken over \emph{all} possible partitions of $[0,1]$ (see, e.g.,~\cite{FrizHairer}  or~\cite{LiuPromel}). 
Also, just as for the usual quadratic variation, the $p^{\text{th}}$ variation of any continuous function, and thus in particular of the sample paths of any continuous stochastic process, depends on the choice of the refining sequence of partitions  if $p>1$ (see also the discussion in Section 2 of~\cite{SchiedSpeiserVoloshchenko}).  For the special case $p=2$, certain robustness results are available that allow to translate results obtained for one refining sequence of partitions to another one that, in a certain sense, is comparable to the first; see~\cite{ContDas} and the references therein.

To state our first main result, we fix  $\phi$ and $b$ and  define the  coefficients
\begin{equation}\label{lambdas}
\lambda_{m,k}:=\frac{\phi\big((k+1)b^{-m}\big)-\phi\big(kb^{-m}\big)}{b^{-m}},\qquad m\in\mathbb{N},\, k=0,\dots, b^m-1.
\end{equation}
Next, let $(\Omega,\mathscr{F},\mathbb{P})$ be a probability space supporting an independent sequence $U_1,U_2,\dots$  of random variables with 
a uniform distribution on $\{0,1,\dots, b-1\}$ and define the stochastic processes
\begin{equation}\label{R and Y}
R_m:=\sum_{i=1}^mU_ib^{i-1}\quad\text{and}\quad Y_m:=\lambda_{m,R_m},\qquad m\in\mathbb{N}.
\end{equation}
Note that $R_m$ has a uniform distribution on $\{0,\dots, b^m-1\}$.

\begin{theorem}\label{thm} Under the assumptions stated above, the following assertions hold.
\begin{enumerate}
\item If $|\alpha|<1/b$, the function $f$ is of bounded variation.
\item If $|\alpha|=1/b$, then for all $t\in[0,1]$,
\begin{equation}\label{vanishing pvar}
\lim_{n\uparrow\infty}\sum_{k=0}^{\lfloor tb^n\rfloor}\big|f((k+1)b^{-n})-f(kb^{-n})\big|^p=0\qquad\text{for all $p>1$.}
\end{equation}

\item If $1/b<|\alpha|<1$, we let 
\begin{equation}\label{Z rv}
Z:=\sum_{m=1}^\infty(\alpha b)^{-m}Y_m.
\end{equation}
Then 
the function $f$ is  of bounded variation on $[0,1]$ if and only if $Z=0$ $\mathbb{P}$-a.s.  Otherwise, letting $q:=-\log_{|\alpha|}b$, we have for 
 all $t\in(0,1]$,
\begin{align}\label{pth}
\lim_{n\uparrow\infty}\sum_{k=0}^{\lfloor tb^n\rfloor}\big|f((k+1)b^{-n})-f(kb^{-n})\big|^p=\begin{cases}
0&\ \text{ if } p>q,\\
t\cdot \mathbb{E}[|Z|^{q}]&\ \text{ if } p=q,\\
+\infty &\ \text{ if } p<q.\end{cases}
\end{align}
\end{enumerate}
\end{theorem}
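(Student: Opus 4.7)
The first step is to derive a closed form for the $b$-adic increment $\Delta_k^n:=f((k+1)b^{-n})-f(kb^{-n})$. In the series \eqref{vdw}, all terms with $m\ge n$ vanish because $b^mkb^{-n}$ and $b^m(k+1)b^{-n}$ are integers and $\phi$ is $1$-periodic with $\phi(0)=0$; for $m<n$, periodicity reduces each summand to $\phi((r+1)b^{-j})-\phi(rb^{-j})$ with $j=n-m$ and $r=k\bmod b^j$. The definition \eqref{lambdas} then yields
\[
\Delta_k^n \;=\; \alpha^n\sum_{j=1}^n (\alpha b)^{-j}\,\lambda_{j,\,k\bmod b^j}.
\]
If $K$ is uniform on $\{0,\dots,b^n-1\}$ with $b$-adic digits $(U_1,\dots,U_n)$, then $(K\bmod b^j)_{j=1}^n$ shares the joint law of $(R_j)_{j=1}^n$, so $\Delta_K^n$ and $\alpha^n Z_n$ are equal in distribution, where $Z_n:=\sum_{j=1}^n(\alpha b)^{-j}Y_j$. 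Averaging over $K$ gives the master identity
\[
\sum_{k=0}^{b^n-1}|\Delta_k^n|^p \;=\; \bigl(b|\alpha|^p\bigr)^n\,\mathbb{E}\bigl[|Z_n|^p\bigr]. \qquad(\ast)
\]

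Parts (a) and (b) fall out immediately. For (a), $|\alpha b|<1$ makes the termwise bound $|\phi(b^mt)-\phi(b^ms)|\le Lb^m|t-s|$ summable against $|\alpha|^m$, so $f$ is Lipschitz and hence BV. For (b), $|\alpha|=1/b$ yields $b|\alpha|^p=b^{1-p}$, which decays geometrically when $p>1$; combined with the crude bound $|Z_n|\le Ln$ (using $|(\alpha b)^{-j}|=1$ and $|\lambda_{j,r}|\le L$), identity~$(\ast)$ forces vanishing. For part (c) at $t=1$: $|\alpha b|>1$ ensures $Z_n\to Z$ almost surely and uniformly boundedly, hence in every $L^p$; since $b|\alpha|^q=1$ by the choice of $q$, the trichotomy at $t=1$ drops out of $(\ast)$ in the three regimes $p\gtreqless q$. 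The BV dichotomy also comes from $(\ast)$ with $p=1$: if $f$ is BV, then $V_n^{(1)}(1)$ is bounded while $(b|\alpha|)^n\to\infty$, forcing $\mathbb{E}[|Z|]=0$; conversely, if $Z\equiv0$ almost surely, then on every sample path $Z_n=-\sum_{j>n}(\alpha b)^{-j}Y_j$ is deterministically of order $|\alpha b|^{-n}$, whence $|\Delta_k^n|\le Cb^{-n}$ for all $k,n$, and $f$ is Lipschitz by density of the $b$-adic grid.

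The main technical obstacle is promoting the limit \eqref{pth} from $t=1$ to general $t\in(0,1]$, where the partial sum no longer averages uniformly over all residues mod~$b^n$. I plan a truncation argument: set $\widetilde\Delta_k^{n,N}:=\alpha^n\sum_{j=1}^N(\alpha b)^{-j}\lambda_{j,k\bmod b^j}$, which depends on $k$ only through $k\bmod b^N$. Using $\bigl||x|^p-|y|^p\bigr|\le p(|x|\vee|y|)^{p-1}|x-y|$ together with the tail estimate $|\Delta_k^n-\widetilde\Delta_k^{n,N}|\le C|\alpha|^n|\alpha b|^{-N}$, the total replacement error over $k\le\lfloor tb^n\rfloor$ is $O\bigl((b|\alpha|^p)^n|\alpha b|^{-N}\bigr)$, which is $O(|\alpha b|^{-N})$ uniformly in $n$ when $p=q$. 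On the other hand, residues modulo $b^N$ are equidistributed among $\{0,\dots,\lfloor tb^n\rfloor\}$ with absolute error $O(b^N)$, so the truncated sum equals $t\,\mathbb{E}[|Z_N|^q]+o_n(1)$; letting first $n\to\infty$ and then $N\to\infty$ pins the limit down to $t\,\mathbb{E}[|Z|^q]$. The cases $p>q$ and $p<q$ are handled identically, comparing to $\mathbb{E}[|Z_N|^p]$ (which is eventually bounded above and, when $Z\not\equiv 0$, also bounded below by $\tfrac12\mathbb{E}[|Z|^p]>0$) and using $(b|\alpha|^p)^n\to 0$ or $\infty$ respectively. The delicate point throughout is coupling the two limits $n\to\infty$ (equidistribution) and $N\to\infty$ (truncation) in a compatible way, which the uniform-in-$n$ truncation bound makes possible.
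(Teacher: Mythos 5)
Your proposal is correct, and for the most delicate part of the argument it takes a genuinely different route from the paper. The master identity $(\ast)$ is exactly the paper's Lemma~\ref{lemma 1} (derived the same way, via truncation of the series at level $n$ and the observation that $k\bmod b^j$ for a uniform $k$ reproduces the law of $R_j$), and parts (a), (b), the BV dichotomy, and the trichotomy at $t=1$ then follow as in the paper; your Lipschitz arguments for (a) and for the direction \lq\lq$Z=0$ a.s.\ $\Rightarrow$ BV\rq\rq\ are in fact slightly stronger and more elementary than the paper's, which instead uses monotonicity of the first-variation sums and identifies their limit with the total variation. Where you genuinely diverge is the passage from $t=1$ to general $t\in(0,1]$: the paper exploits the self-similarity $f(t)=\phi(t)+\alpha f(bt)$ and a translation identity $f(t-kb^{-\nu})=g(t)+f(t)$ with Lipschitz $g$, combined with a Minkowski-type stability lemma (Lemma~\ref{additive p-var lemma}) and a sandwich argument over $b$-adic $t$; you instead truncate the increment at level $N$ so that it depends on $k$ only through $k\bmod b^N$ and then use equidistribution of residues in $\{0,\dots,\lfloor tb^n\rfloor\}$, interchanging the limits $n\to\infty$ and $N\to\infty$ via a truncation error that is uniform in $n$. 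Your route is more self-contained (it never invokes the functional equation for $f$), while the paper's is shorter given its auxiliary lemma and reuses machinery needed later for the signed variation. One small point to tighten: for $p<q$ the truncation error is $O((b|\alpha|^p)^n|\alpha b|^{-N})$, which is \emph{not} uniformly bounded in $n$; the argument still closes because this error is a fraction $O(|\alpha b|^{-N})$ of the main term $t(b|\alpha|^p)^n\mathbb E[|Z_N|^p]$, so choosing $N$ large enough that the relative error is below $1/2$ yields the desired divergence — but this multiplicative (rather than additive) comparison should be stated explicitly, since \lq\lq handled identically\rq\rq\ glosses over it.
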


\goodbreak
\begin{remark}Let us comment on Theorem~\ref{thm}. 
\begin{enumerate}[{\rm(i)}] 
\item On the one hand, in the case  $|\alpha|=1/b$ the function $f$ may be nowhere differentiable and thus not be of bounded variation despite the fact that~\eqref{vanishing pvar} holds. An example is the classical Takagi function, which is obtained by taking $\phi$ as the tent map, $b=2$, and $\alpha=1/2$ (see, e.g.,~\cite{AllaartKawamura}).  
\item On the other hand, even if $1/b<|\alpha|<1$, it may happen that the function $f$ is not a fractal function. Indeed, the following example is given in~\cite{BaranskiSurvey}: let $\phi(t)=g(t)-\alpha g(bt)$ for some function $g$ on $\mathbb{R}$ that is periodic with period 1 and vanishes on $\mathbb{Z}$. Then $f=g$, and so $f$ will be smooth if $g$ is. 
\item If $\phi$ is the tent map, $b=2$, and $\alpha>0$, then $f$ is often called the Takagi--Landsberg  function with Hurst parameter $H=-\log_b\alpha$. The corresponding case of Theorem~\ref{thm} (c) is contained in Theorem 2.1 of~\cite{MishuraSchied2}\footnote{Note that in the printed version of~\cite{MishuraSchied2}, there is a factor $2^{1-1/H}$ missing in the statement of that theorem.}; see also Proposition~\ref{tent prop}. Note moreover that 
the $p^{\text{th}}$ variation of fractional Brownian motion with Hurst parameter $H\in(0,1)$ vanishes for $p>1/H$, is a nonconstant linear function of time for $p=1/H$, and is infinite for $p<1/H$ (see, e.g., 
\cite[Section 1.18]{Mishura}). Therefore,~\eqref{pth} 
suggests that also for general $\phi$, $b$, and $\alpha$, the number 
$$H:=-\frac1{\log_{|\alpha|}b}=-\log_b|\alpha|
$$
can be called the \emph{Hurst parameter} of $f$. This leads to the following alternative parameterization of the function $f$ in~\eqref{vdw} for $\alpha>0$,
\begin{equation}\label{f H eq}
f(t)=\sum_{m=0}^\infty b^{-H m}\phi(b^mt),\qquad t\in[0,1].
\end{equation}
See Figure~\ref{figure 1} for plots of $\<f\>_1^{(1/H)}$ as a function of $H$ for two different choices of $\phi$.
\end{enumerate}
\end{remark}

\begin{figure}
\begin{minipage}[b]{8cm}
\begin{overpic}[height=5cm]{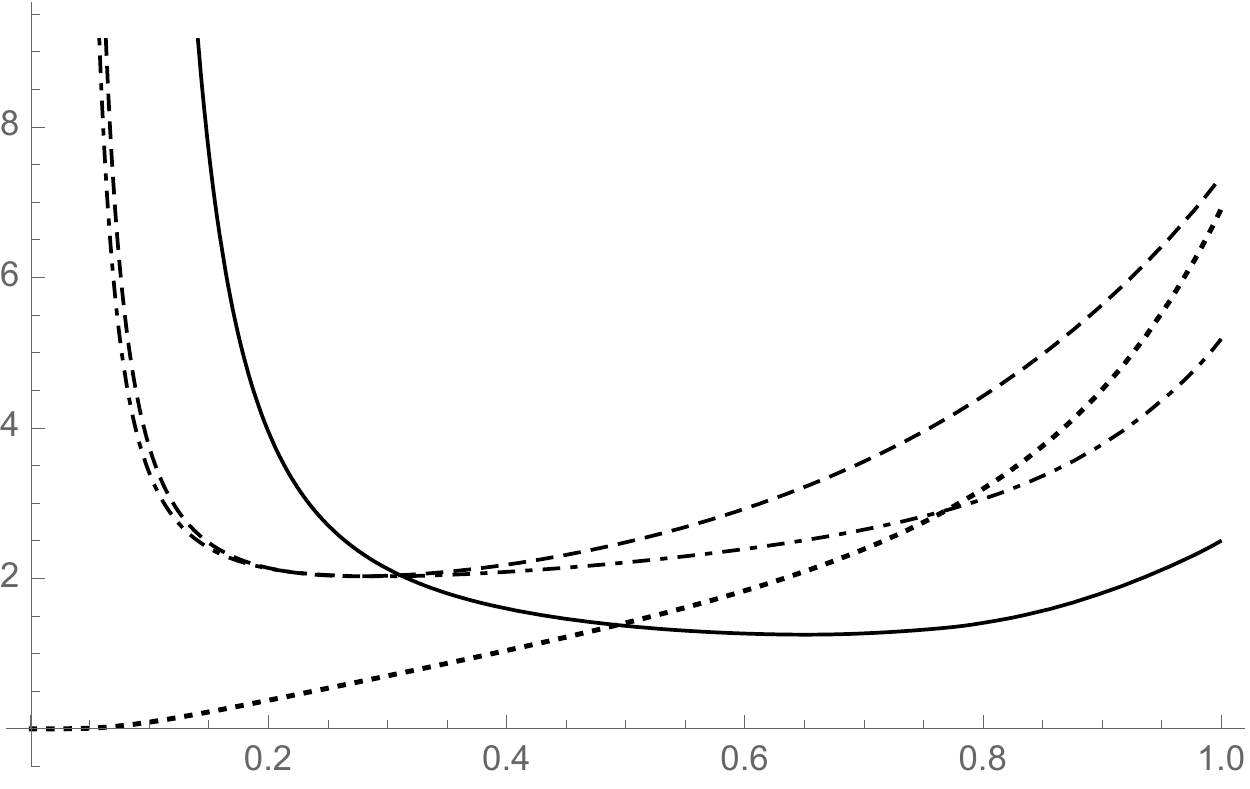}
\end{overpic}
\end{minipage}\quad
\begin{minipage}[b]{8cm}
\begin{overpic}[height=5cm]{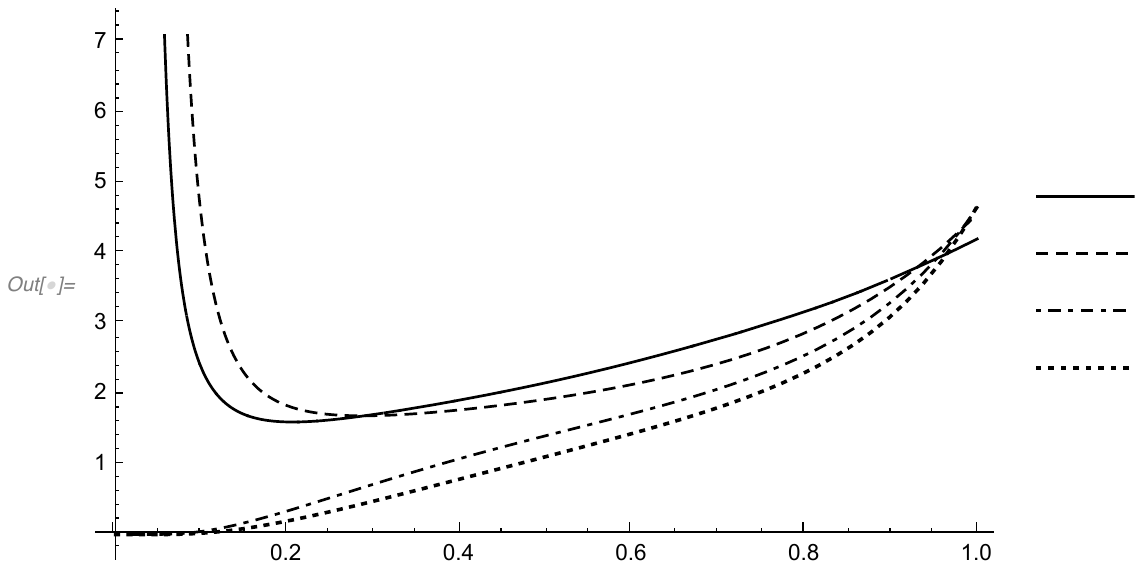}
\put(100,35.5){\small$b=2$}
\put(100,30.0){\small$b=3$}
\put(100,25){\small$b=4$}
\put(100,19){\small$b=5$}
\end{overpic}
\end{minipage}
\caption{The $(\frac1H)^{\text{th}}$ variation $\<f\>_1^{(1/H)}$ of the function $f$ in~\eqref{f H eq} as a function of $H\in(0,1)$, for several choices of $b$, and for $\phi(t)=5\min_{z\in\mathbb{Z}}|t-z|$ (left) and $\phi(t)=\frac12\sin(2\pi t)$ (right). Note that $\<f\>_1^{(1/H)}$ tends to 0 or $+\infty$ as $H\downarrow0$, depending on whether the $L^\infty$-norm of 
$Z_0:=\sum_{m=1}^\infty b^{-m}Y_m$
is less than or larger than~1.}\label{figure 1}
\end{figure}

Now we prepare for the proof of Theorem~\ref{thm}, which will be based on two auxiliary lemmas. To this end,
we fix $b\in\{2,3,\dots\}$ and let $f$ denote the function defined in~\eqref{vdw}. For $p\ge1$, $t\in[0,1]$,  $g\in C[0,1]$, and $n\in\mathbb{N}$, we define
\begin{equation}\label{Vptng}
V_{p,t,n}(g):=\sum_{k=0}^{\lfloor tb^n\rfloor}\big|g((k+1)b^{-n})-g(kb^{-n})\big|^p.
\end{equation}

\begin{lemma}\label{lemma 1}
For $n\in\mathbb{N}$, $p\ge1$ and $(Y_m)_{m\in\mathbb N}$ as in~\eqref{R and Y},
\[
V_{p,1,n}(f)=(|\alpha|^p b)^n \mathbb{E}\bigg[\Big|\sum_{m=1}^{n}(\alpha b)^{-m}Y_m\Big|^p\bigg].
\]
\end{lemma}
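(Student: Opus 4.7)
The plan is to compute each dyadic increment $f((k+1)b^{-n})-f(kb^{-n})$ explicitly, and then re-express the sum over $k$ as an expectation under the uniform measure on $\{0,\dots,b^n-1\}$.

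First I would exploit that $\phi$ is $1$-periodic and vanishes on $\mathbb{Z}$ in order to truncate the series defining $f$. For $m\geq n$, both $b^m\cdot k b^{-n}=kb^{m-n}$ and $b^m\cdot(k+1)b^{-n}=(k+1)b^{m-n}$ are integers, so the corresponding terms of $f((k+1)b^{-n})-f(kb^{-n})$ vanish. The series therefore collapses to a finite sum over $m=0,1,\dots,n-1$, and Lipschitz continuity of $\phi$ justifies the termwise splitting.

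Next, for each $m<n$, I would rewrite
\[
\phi\bigl(b^m(k+1)b^{-n}\bigr)-\phi\bigl(b^m kb^{-n}\bigr)=\phi\bigl((k+1)b^{-(n-m)}\bigr)-\phi\bigl(kb^{-(n-m)}\bigr),
\]
reduce $k$ modulo $b^{n-m}$ using periodicity, and recognize the right-hand side as $b^{-(n-m)}\lambda_{n-m,\,k\bmod b^{n-m}}$ by the definition~\eqref{lambdas}. Re-indexing $\ell:=n-m$ and factoring out $\alpha^n$ gives the clean identity
\[
f((k+1)b^{-n})-f(kb^{-n})=\alpha^n\sum_{\ell=1}^n(\alpha b)^{-\ell}\lambda_{\ell,\,k\bmod b^\ell}.
\]

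Finally I would raise this to the $p^{\text{th}}$ power and sum over $k=0,\dots,b^n-1$. Writing $k$ in base $b$ as $k=\sum_{i=1}^n u_i b^{i-1}$ with $u_i\in\{0,\dots,b-1\}$, the uniform distribution on $\{0,\dots,b^n-1\}$ matches the joint law of $(U_1,\dots,U_n)$; moreover $k\bmod b^\ell=\sum_{i=1}^\ell u_ib^{i-1}$, which is precisely $R_\ell$ under this identification, and hence $\lambda_{\ell,\,k\bmod b^\ell}$ corresponds to $Y_\ell$. Converting the normalized sum $b^{-n}\sum_{k}$ into the expectation $\mathbb{E}$ then yields
\[
V_{p,1,n}(f)=|\alpha|^{np}\cdot b^n\cdot\mathbb{E}\bigg[\Big|\sum_{m=1}^n(\alpha b)^{-m}Y_m\Big|^p\bigg]=(|\alpha|^pb)^n\,\mathbb{E}\bigg[\Big|\sum_{m=1}^n(\alpha b)^{-m}Y_m\Big|^p\bigg],
\]
as desired. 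I do not anticipate a real obstacle here; the argument is essentially an algebraic manipulation followed by a probabilistic reinterpretation, and the only point that requires a little care is the simultaneous matching of the base-$b$ digits of $k$ with the joint law of $(U_1,\dots,U_n)$, so that the multiscale decomposition at scales $\ell=1,\dots,n$ is captured consistently by a single expectation.
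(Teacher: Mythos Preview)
Your proposal is correct and follows essentially the same route as the paper: truncate the series at level $n$ using that $\phi$ vanishes on $\mathbb{Z}$, express the increment via the coefficients $\lambda_{n-m,k}$, re-index, and rewrite the sum over $k$ as a $b^n$-times-expectation under the uniform law on $\{0,\dots,b^n-1\}$. The only cosmetic difference is that the paper first writes $\lambda_{m,R_n}$ and then invokes periodicity of $\phi$ to replace $R_n$ by $R_m$, whereas you reduce $k\bmod b^\ell$ up front and identify $k\bmod b^\ell$ with $R_\ell$ through the base-$b$ digit expansion; these are two phrasings of the same step.
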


\begin{proof}
 Let the $n^{\text{th}}$ truncation of $f$ be given by
\[
f_n(t)=\sum_{m=0}^{n-1} \alpha^m\phi(b^mt).
\]
It is easy to see that $f_n(kb^{-n})=f(kb^{-n})$ for $k=0,\dots,b^n$ since $\phi$ vanishes on $\mathbb{Z}$. 
Hence, for $k=0,\dots,b^n-1$,
\begin{align*}
f((k+1)b^{-n})-f(kb^{-n})&=f_n((k+1)b^{-n})-f_n(kb^{-n})\\
&=\sum_{m=0}^{n-1}\alpha^m\Big(\phi((k+1)b^{m-n})-\phi(kb^{m-n})\Big)\\
&=\sum_{m=0}^{n-1}\alpha^mb^{m-n}\lambda_{n-m,k}.
\end{align*}
Using the fact that $R_m$ has a uniform distribution on $\{0,\dots, b^m-1\}$, we hence get
\begin{align}
V_{p,1,n}(f)&=\sum_{k=0}^{b^n-1}\Big|\sum_{m=0}^{n-1}\alpha^mb^{m-n}\lambda_{n-m,k}\Big|^p\nonumber\\
&=|\alpha|^{np}\sum_{k=0}^{b^n-1}\Big|\sum_{m=1}^{n}(\alpha b)^{-m}\lambda_{m,k}\Big|^p=|\alpha|^{np}b^n\mathbb{E}\bigg[\Big|\sum_{m=1}^{n}(\alpha b)^{-m}\lambda_{m,R_n}\Big|^p\bigg].\label{lemma 1 first eq}
\end{align}
Next note that for any $x$ and $n\geq m$, due to the periodicity of $\phi$ and by~\eqref{R and Y}, 
$$\phi\big(x+R_nb^{-m}\big)=\phi\Big(x+\sum_{i=1}^nU_ib^{i-1-m}\Big)=\phi\Big(x+\sum_{i=1}^mU_ib^{i-1-m}\Big)=\phi\big(x+R_mb^{-m}\big).
$$
 Hence, $\lambda_{m,R_n}=\lambda_{m,R_m}$, and so~\eqref{lemma 1 first eq} yields
$$V_{p,1,n}(f)=(|\alpha|^pb)^n\mathbb{E}\bigg[\Big|\sum_{m=1}^{n}(\alpha b)^{-m}Y_m\Big|^p\bigg].
$$
This completes the proof.
\end{proof}

The following simple lemma is a slightly strengthened version of~\cite[Lemma 3.1]{MishuraSchied2}.

\begin{lemma}\label{additive p-var lemma}For $p\ge1$ and $t\in(0,1]$, suppose  that $g\in C[0,1]$ is a function such that $V_{p,t,n}(g)\to0$ as $n\uparrow\infty$. Then, for $h\in C[0,1]$, the limit $\lim_n V_{p,t,n}(h)$ exists if and only if $\lim_n V_{p,t,n}(g+h)$ exists, and, in this case, both limits are equal.
\end{lemma}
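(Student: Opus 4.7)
The plan is to treat $V_{p,t,n}(\cdot)^{1/p}$ as a seminorm on $C[0,1]$ and then reduce to Minkowski's inequality applied to the vector of increments over the grid $\mathbb{T}_n$ truncated at $\lfloor tb^n\rfloor$. Concretely, for any $g\in C[0,1]$, the quantity $V_{p,t,n}(g)^{1/p}$ is exactly the $\ell^p$-norm of the finite vector $(g((k+1)b^{-n})-g(kb^{-n}))_{k=0,\dots,\lfloor tb^n\rfloor}$, so by the triangle inequality in $\ell^p$,
\[
\bigl|V_{p,t,n}(g+h)^{1/p}-V_{p,t,n}(h)^{1/p}\bigr|\le V_{p,t,n}(g)^{1/p}
\]
for every $n$ and every $h\in C[0,1]$.

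Next, I would invoke the hypothesis $V_{p,t,n}(g)\to 0$ to conclude that the right-hand side tends to $0$. From the display above, the sequences $V_{p,t,n}(h)^{1/p}$ and $V_{p,t,n}(g+h)^{1/p}$ differ by a vanishing error, hence one has a limit in $[0,\infty]$ if and only if the other does, and these limits coincide. Here I would briefly address the possibly infinite case separately: if either sequence is unbounded, then by the inequality so is the other, and both diverge to $+\infty$; if both are bounded, the usual $\varepsilon$-$N$ argument applies.

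Finally, I would pass back from $V_{p,t,n}(\cdot)^{1/p}$ to $V_{p,t,n}(\cdot)$ using continuity of $x\mapsto x^p$ on $[0,\infty]$ (with the convention $(+\infty)^p=+\infty$): $\lim_n V_{p,t,n}(h)$ exists if and only if $\lim_n V_{p,t,n}(h)^{1/p}$ exists, and similarly for $g+h$, so the two equivalences combined with equality of the $1/p$-limits yield equality of the original limits as well.

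I do not expect any serious obstacle; this is essentially a one-line Minkowski argument dressed up to accommodate the truncated sum. The only mildly delicate point is bookkeeping the infinite-limit case so that the statement ``one limit exists iff the other does'' really follows from the seminorm inequality rather than being asserted just in the finite regime, but this is handled transparently by the inequality above.
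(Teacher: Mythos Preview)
Your argument is correct and is essentially the same as the paper's: the paper also writes down the Minkowski (triangle) inequality for $V_{p,t,n}(\cdot)^{1/p}$ and passes to the limit $n\uparrow\infty$. Your version is slightly more explicit about the $+\infty$ case and the passage from $V^{1/p}$ back to $V$, but there is no substantive difference.
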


\begin{proof}Minkowski's inequality yields that
\begin{align*}
\big(V_{p,t,n}(h)\big)^{1/p}-\big(V_{p,t,n}(g)\big)^{1/p}\le \big(V_{p,t,n}(g+h)\big)^{1/p}\le \big(V_{p,t,n}(g)\big)^{1/p}+\big(V_{p,t,n}(h)\big)^{1/p}.
\end{align*}
Passing to the limit $n\uparrow\infty$ thus yields the \lq\lq only if\rq\rq\ part of the assertion. Rearranging the preceding inequality or replacing $h$ with $g+h$ and $g$ with $-g$ yields the \lq\lq if\rq\rq\ part.
\end{proof}

\begin{proof}[Proof of Theorem~\ref{thm}] Let $C$ denote a Lipschitz constant for $\phi$. Then, by~\eqref{lambdas}, $|\lambda_{m,k}|\le C$ and in turn $|Y_m|\le C$. 

{\bf (a) $\bm{|\alpha|<1/b}$.} Taking $p=1$, Lemma~\ref{lemma 1} yields that 
\begin{align*}0\le V_{1,1,n}(f)&
=\mathbb{E}\bigg[\bigg|\sum_{m=0}^{n-1}\alpha^mb^{m}Y_{n-m}\bigg|\bigg]\le C\sum_{m=0}^\infty(|\alpha| b)^m<\infty.
\end{align*}
Thus, the sequence $(V_{1,1,n}(f))$ is bounded uniformly in $n$. Next, for $p=1$, the triangle inequality yields that $V_{1,1,n}(f)\le V_{1,1,n+1}(f)$ for all $n$, and so $V_{1,1,n}(f)$ converges to a finite limit as $n\uparrow\infty$. But since $f$ is continuous, this limit must coincide with the total variation of $f$ (see, e.g., Theorem 2 in \S5 of Chapter VIII in~\cite{Natanson}). 

{\bf (b) $\bm{|\alpha|=1/b}$.} Here we show that $V_{p,1,n}(f)\to0$ if $p>1$.  Indeed, in this case, Lemma~\ref{lemma 1} yields that 
\begin{align*}
0\le V_{p,1,n}(f)^{1/p}&=(|\alpha|^pb)^{n/p}\mathbb{E}\bigg[\Big|\sum_{m=1}^{n}(\text{sgn}\,\alpha)^{m}Y_m\Big|^p\bigg]^{1/p}\le (|\alpha|b^{1/p})^n\sum_{m=1}^n\|Y_m\|_{L^p}\le (|\alpha|b^{1/p})^nnC,
\end{align*}
and the rightmost term tends to zero as $n\uparrow\infty$. Since $0\le V_{p,t,n}(f)\le V_{p,1,n}(f)$, the result holds for all $t\in[0,1]$. 

{\bf (c) $\bm{|\alpha|>1/b}$.} First  we deal with the case $Z=0$ $\mathbb{P}$-a.s.  Then 
$$\sum_{m=1}^{n}(\alpha b)^{-m}Y_m=-\sum_{m=n+1}^{\infty}(\alpha b)^{-m}Y_m\quad\text{$\mathbb{P}$-a.s.}
$$
Hence, Lemma~\ref{lemma 1} yields that for $p=1$, 
\begin{align*}
0&\le V_{p,1,n}(f)= (|\alpha|b)^n\mathbb{E}\bigg[\bigg|\sum_{m=n+1}^{\infty}(\alpha b)^{-m}Y_m\bigg|\bigg]\le  C(|\alpha|b)^n\bigg|\sum_{m=n+1}^\infty(\alpha b)^{-m}\bigg|
=\frac{C}{|\alpha b-1|}.\end{align*}
Thus, we conclude as in (a) that $f$ must be of bounded variation. Once~\eqref{pth} will have been established,  the converse implication will follow by taking $p:=1<q$ in~\eqref{pth}.

For the remainder of the proof,  we suppose that $\mathbb{P}[Z\neq0]>0$, which implies $\mathbb{E}[|Z|^p]>0$ for any $p\geq 1$.   The fact that $|Y_m|\le C$ implies that  $\sum_{m=1}^{n}(\alpha b)^{-m}Y_m$  converges boundedly to $Z$ as $n\uparrow\infty$.  Therefore, for any $p\ge1$,
$$\mathbb{E}\bigg[\Big|\sum_{m=1}^{n}(\alpha b)^{-m}Y_m\Big|^p\bigg]\longrightarrow\mathbb{E}[|Z|^p]<\infty\qquad\text{as $n\uparrow\infty$.}
$$
For  $p>q$  we have $|\alpha|^pb<1$, and so Lemma~\ref{lemma 1} yields $V_{p,1,n}(f)\to0$. This establishes the first case in~\eqref{pth} for $t=1$. Since $0\le V_{p,t,n}(f)\le V_{p,1,n}(f)$, the result holds for all $t\in[0,1]$. 

For $p=q$ we get $V_{p,1,n}(f)\to \mathbb{E}[|Z|^p]>0$, and this yields the second case in~\eqref{pth} for $t=1$.  To establish the assertion also for $t\in(0,1)$, 
we observe that~\eqref{vdw} implies that
$$f(t)=\phi(t)+\alpha f(bt),\qquad t\in[0,1/b].
$$
Since $\phi$ is Lipschitz continuous and $p>1$, one easily gets $V_{p,t,n}(\phi)\to0$ for all $t$. Moreover, $V_{p,1/b,n}(f(b\,\cdot))=V_{p,1,n-1}(f)\to \mathbb{E}[|Z|^p]$. Hence, Lemma~\ref{additive p-var lemma} 
yields that
$$\lim_{n\uparrow\infty}V_{p,1/b,n}(f)=|\alpha|^p\lim_{n\uparrow\infty}V_{p,1/b,n}(f(b\,\cdot))=|\alpha|^p \mathbb{E}[|Z|^p]=\frac1b \mathbb{E}[|Z|^p].
$$ 
Iterating this argument gives 
\begin{equation}\label{bnu eq}
\lim_nV_{p,b^{-\nu},n}(f)=b^{-\nu}\mathbb{E}[|Z|^p]\qquad\text{for all $\nu\in\mathbb{N}$.}
\end{equation}  Next, for $\nu\in\mathbb{N}$, $k\in\{0,..,b^\nu-1\}$, and $t\in[ kb^{-\nu},1]$, the periodicity of $\phi$ implies that
\begin{align*}
f(t-kb^{-\nu})&=\sum_{m=0}^{\nu-1}\alpha^m\big(\phi(b^m(t-kb^{-\nu}))-\phi(b^mt)\big)+f(t)=:g(t)+ f(t).
\end{align*}
Since $g$ is Lipschitz continuous, it follows from Lemma~\ref{additive p-var lemma} and~\eqref{bnu eq} that 
$$V_{p,(k+1)b^{-\nu},n}(f)-V_{p,kb^{-\nu},n}(f)\longrightarrow b^{-\nu}\mathbb{E}[|Z|^p]\qquad\text{as $n\uparrow\infty$.}$$
Thus, we get $V_{p,t,n}(f)\to t\mathbb{E}[|Z|^p]$ whenever $t=kb^{-\nu}$ for certain $\nu\in\mathbb{N}$ and $k\in\{0,\dots, b^\nu\}$.  A sandwich argument then extends this fact to all $t\in[0,1]$.

Finally, if $p<q$, then $|\alpha|^pb>1$, and so $V_{p,1,n}(f)\to\infty$ by Lemma~\ref{lemma 1}. The analogous fact for $0<t<1$ can be proved as in the final part of the proof of Theorem 2.1 in~\cite{MishuraSchied2}.
\end{proof}

Combining Theorem~\ref{thm} with~\cite[Theorem 4.1]{HuLau} and~\cite[Theorem 2.4]{BaranskiSurvey} yields immediately the following corollary. See~\cite{BaranskiSurvey,HuLau} for further conditions that are equivalent to its statements (d)--(f).

\goodbreak
\begin{corollary}\label{cor}In the context of Theorem~\ref{thm} (c), the following conditions are equivalent.
\begin{enumerate}
\item $f$ is not of bounded variation.
\item For $q=-\log_{|\alpha|}b$, the $q^{\text{th}}$ variation of $f$ is strictly positive.
\item The random variable 
$Z=\sum_{m=1}^\infty (\alpha b)^{-m}Y_m$
satisfies $\mathbb P[Z\neq0]>0$.
\end{enumerate}
If, in addition,  $\phi$ is piecewise $C^1$, $\alpha\in(1/b,1)$, and $H:=-\log_b\alpha$, then {\rm(a)---(c)} are equivalent to each of the following conditions.  
\begin{enumerate}
\addtocounter{enumi}{3}
\item  $f$ is not piecewise $C^1$.
\item $f$ is nowhere differentiable.
\item The box dimension of the graph of $f$ is $2-H$.
\end{enumerate}
\end{corollary}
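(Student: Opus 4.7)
The plan is to first establish the equivalence of (a), (b), and (c) directly from Theorem~\ref{thm}(c), and then to import the additional equivalences (d)--(f) from the cited literature.

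For the equivalence of (a) and (c), observe that Theorem~\ref{thm}(c) asserts exactly the dichotomy: under the assumption $1/b<|\alpha|<1$, the function $f$ is of bounded variation if and only if $Z=0$ $\mathbb{P}$-a.s., and otherwise the $q^{\text{th}}$ variation is given by the linear function $t\mapsto t\cdot\mathbb{E}[|Z|^q]$. Thus (a) is nothing but the negation of $\{Z=0\ \mathbb{P}\text{-a.s.}\}$, i.e., (c). For the equivalence of (b) and (c), note that $|Z|$ is bounded (since $|Y_m|\le C$ and $|\alpha b|^{-1}<1$), so $\mathbb{E}[|Z|^q]$ is finite, and moreover $\mathbb{E}[|Z|^q]>0$ if and only if $\mathbb{P}[Z\ne 0]>0$. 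By the second case of~\eqref{pth}, $\<f\>^{(q)}_t=t\cdot\mathbb{E}[|Z|^q]$ when $f$ is not of bounded variation, while the first line of Theorem~\ref{thm}(c) gives $\<f\>^{(q)}\equiv 0$ when $Z=0$ $\mathbb{P}$-a.s. Hence strict positivity of the $q^{\text{th}}$ variation is equivalent to $\mathbb{P}[Z\ne 0]>0$, which is (c).

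For the equivalences with (d), (e), and (f) under the additional hypotheses that $\phi$ is piecewise $C^1$ and $\alpha\in(1/b,1)$, the strategy is purely citational. The equivalence of (a) (or equivalently (c) as already established) with the nowhere-differentiability statement (e) and with the failure of being piecewise $C^1$ (d) is contained in~\cite[Theorem 4.1]{HuLau}, while the identification of the box dimension of the graph in (f) with the Hurst exponent $H=-\log_b\alpha$ under the same non-triviality condition is contained in~\cite[Theorem 2.4]{BaranskiSurvey}. Chaining these equivalences to the triangle (a)$\Leftrightarrow$(b)$\Leftrightarrow$(c) proved above completes the corollary.

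The only step that required any argument of our own is the equivalence (b)$\Leftrightarrow$(c), and this is immediate from the explicit formula for the slope of the $q^{\text{th}}$ variation in Theorem~\ref{thm}(c); there is no real obstacle.
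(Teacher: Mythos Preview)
Your proposal is correct and follows essentially the same approach as the paper, which simply states that the corollary is immediate from Theorem~\ref{thm} together with \cite[Theorem 4.1]{HuLau} and \cite[Theorem 2.4]{BaranskiSurvey}. The only minor imprecision is that ``the first line of Theorem~\ref{thm}(c)'' asserts bounded variation rather than $\<f\>^{(q)}\equiv0$ directly; you are implicitly using the standard fact that a continuous function of bounded variation has vanishing $p^{\text{th}}$ variation for every $p>1$, which is harmless here since $q>1$.
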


Now we state a sufficient condition for the  properties {\rm(a)---(c)} in Corollary~\ref{cor}. This condition is easy to verify and obviously satisfied  for the  Takagi--van der Waerden and Weierstra\ss\ functions. We refer to Propositions 4.3 and 5.6 in~\cite{HuLau} and Theorem 5 in~\cite{PrzytyckiUrbanski} for  other sufficient conditions.

\begin{proposition}\label{condition prop} Suppose that $\alpha\in(1/b,1)$ and that 
\begin{equation}\label{cond}
\{0\}\neq \{\phi(b^{-k}):k\in\mathbb{N}\}\subset[0,\infty).
\end{equation}
 Then the properties {\rm(a)---(c)} in Corollary~\ref{cor} are satisfied.
 \end{proposition}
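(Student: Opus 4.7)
By Corollary~\ref{cor}, it suffices to verify condition (c), i.e.\ that $Z=\sum_{m=1}^\infty(\alpha b)^{-m}Y_m$ is not $\mathbb{P}$-a.s.\ zero. A direct expectation argument cannot work: since the increments telescope,
$$\mathbb{E}[Y_m]=\frac{1}{b^m}\sum_{k=0}^{b^m-1}\lambda_{m,k}=\phi(1)-\phi(0)=0,$$
so $\mathbb{E}[Z]=0$. Instead, the plan is to exhibit a single event of positive probability on which the partial sum of $Z$ is bounded below by a \emph{fixed} positive constant, while the remaining tail is small enough to be dominated.

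The natural candidate is $A_N:=\{U_1=\cdots=U_N=0\}$, which has probability $b^{-N}>0$. On $A_N$, $R_m=0$ for every $m\le N$, so $\phi(0)=0$ gives
$$Y_m=\lambda_{m,0}=\frac{\phi(b^{-m})}{b^{-m}}=b^m\phi(b^{-m}),\qquad m=1,\dots,N,$$
and therefore $(\alpha b)^{-m}Y_m=\alpha^{-m}\phi(b^{-m})\ge0$ by hypothesis \eqref{cond}. Fixing $k_0$ with $\phi(b^{-k_0})>0$ (which exists because the set in \eqref{cond} is not $\{0\}$), one gets, for every $N\ge k_0$,
$$\sum_{m=1}^{N}(\alpha b)^{-m}Y_m\ge \alpha^{-k_0}\phi(b^{-k_0})=:c>0,$$
with $c$ independent of $N$.

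The remaining tail is handled deterministically using the Lipschitz bound $|Y_m|\le C$ from the proof of Theorem~\ref{thm}: since $\alpha b>1$,
$$\Big|\sum_{m=N+1}^{\infty}(\alpha b)^{-m}Y_m\Big|\le C\sum_{m=N+1}^{\infty}(\alpha b)^{-m}=\frac{C(\alpha b)^{-N}}{\alpha b-1}\xrightarrow[N\to\infty]{}0.$$
Choosing $N$ large enough that this bound is strictly smaller than $c$ forces $Z>0$ on $A_N$, and so $\mathbb{P}[Z\neq 0]\ge\mathbb{P}[A_N]=b^{-N}>0$, which verifies Corollary~\ref{cor}(c). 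There is no real obstacle here; the only mild point is ensuring that the sign-frozen initial block is made long enough that the geometric tail, which shrinks at rate $(\alpha b)^{-N}$, cannot cancel the lower bound $c$.
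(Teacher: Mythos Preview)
Your proof is correct and follows essentially the same approach as the paper: both restrict to the event $\{U_1=\cdots=U_N=0\}$, compute $Y_m=b^m\phi(b^{-m})$ there, use hypothesis~\eqref{cond} to bound the partial sum from below by a fixed positive constant, and then choose $N$ large enough that the geometric tail (controlled via the Lipschitz constant) cannot cancel it. The only differences are cosmetic---your lower bound $\alpha^{-k_0}\phi(b^{-k_0})$ is even slightly sharper than the paper's $\phi(b^{-M})$, and the preliminary remark about $\mathbb{E}[Z]=0$ is extra commentary.
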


\begin{proof} We show that condition (c) of Corollary~\ref{cor} is satisfied given~\eqref{cond}. To this end, let $C$ denote the Lipschitz constant of $\phi$. By assumption, there is $M\in\mathbb{N}$ such that $\phi(b^{-M})>0$. Choose $N>M$ and $\delta>0$ such that
\[
C \sum_{m=N}^\infty (\alpha b)^{-m} <\phi(b^{-M})-\delta.
\]
Then, for $\omega\in\{U_1=0,U_2=0,\dots,U_N=0\}$ and $m\le N$, we have $Y_m(\omega)=\lambda_{m,0}=b^m\phi(b^{-m})\geq 0$ and hence
\begin{align*}
\sum_{m=1}^{N-1}(\alpha b)^{-m}Y_m(\omega)\ge (\alpha b)^{-M}Y_M(\omega)\ge \phi(b^{-M}).
\end{align*}
Therefore,
\begin{align*}
|Z(\omega)|&\geq \bigg|\sum_{m=1}^{N-1}(\alpha b)^{-m} Y_m(\omega)\bigg|-\bigg|\sum_{m=N}^{\infty}(\alpha b)^{-m} Y_m(\omega)\bigg|\ge \phi(b^{-M})-C \sum_{m=N}^\infty (\alpha b)^{-m} >\delta.
\end{align*}
Since $\mathbb{P}[U_1=0,U_2=0,\dots,U_N=0]=b^{-N}>0$, we cannot have $\mathbb P[Z=0]=1$.
\end{proof}

 \begin{remark}By considering $\widetilde\phi(t):=-\phi(t)$ or $\widehat\phi(t):=\phi(-t)$ or $\overline\phi(t):=-\phi(-t)$, one sees that~\eqref{cond} can be replaced by  several similar conditions. For instance, requiring~\eqref{cond}  for $\overline\phi$ is equivalent to the condition   $\{0\}\neq \{\phi(1-b^{-k}):k\in\mathbb{N}\}\subset(-\infty,0]$. Another easy consequence is that, for any base function $\phi$ which has a nonvanishing right derivative  at $0$ or a nonvanishing left derivative at $1$, there exists some $b\in\mathbb{N}$ such that the properties {\rm(a)---(c)} in Corollary~\ref{cor} are satisfied.
 \end{remark}
 
 \section{Examples and signed $p^{\text{th}}$ variation}\label{example section}

Proposition~\ref{condition prop} and Theorem~\ref{thm} yield immediately the following corollary.

\begin{corollary}
For a fixed $\alpha\in(1/b,1)$ and $q=-\log_\alpha b$, there exists a constant $K\in(0,\infty)$ such that the Weierstra\ss\ function,
\[
w(t)=\sum_{n=0}^\infty \alpha^n \sin(2\pi b^nt),\quad t\in[0,1],
\]
has linear  $q^{\text{th}}$ variation, $\<w\>_t^{(q)}=tK$,  along the sequence of $b$-adic partitions~\eqref{b-adic partitions}.\end{corollary}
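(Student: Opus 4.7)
The plan is to verify the hypothesis of Proposition~\ref{condition prop} for the base function $\phi(t)=\sin(2\pi t)$ and then read off the conclusion from Theorem~\ref{thm}(c). First I would observe that $\phi$ is $1$-periodic, Lipschitz continuous with constant $2\pi$, and vanishes on $\mathbb{Z}$, so the general setup of Section~\ref{results section} applies with this $\phi$ and the given $b\in\{2,3,\dots\}$ and $\alpha\in(1/b,1)$; the function $w$ is then precisely the $f$ of~\eqref{vdw}.

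Next I would check condition~\eqref{cond}. For every $b\ge2$ and $k\ge1$ the argument $2\pi b^{-k}$ lies in $(0,\pi]$, so $\phi(b^{-k})=\sin(2\pi b^{-k})\ge0$. Moreover, for $k$ large enough we have $2\pi b^{-k}\in(0,\pi/2)$ (e.g.\ $k\ge2$ suffices for all $b\ge2$), so $\phi(b^{-k})>0$, and hence $\{\phi(b^{-k}):k\in\mathbb{N}\}$ is a subset of $[0,\infty)$ that is different from $\{0\}$. Proposition~\ref{condition prop} therefore yields that conditions (a)--(c) of Corollary~\ref{cor} hold; in particular the random variable $Z$ from~\eqref{Z rv} satisfies $\mathbb{P}[Z\neq 0]>0$, so $K:=\mathbb{E}[|Z|^q]>0$. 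Boundedness of $\phi'$ gives $|Y_m|\le 2\pi$ and hence $|Z|\le 2\pi\sum_{m=1}^\infty(\alpha b)^{-m}<\infty$ almost surely, so $K<\infty$ as well.

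Finally, since $\alpha\in(1/b,1)$ falls into case (c) of Theorem~\ref{thm} and $q=-\log_\alpha b=-\log_{|\alpha|}b>1$, the theorem gives $\lim_n V_{q,t,n}(w)=tK$ for every $t\in(0,1]$; the same limit also holds trivially at $t=0$ by continuity of $w$. The limiting function $t\mapsto tK$ is continuous on $[0,1]$, so $w$ admits the continuous linear $q^{\text{th}}$ variation $\<w\>_t^{(q)}=tK$ with slope $K\in(0,\infty)$. I do not anticipate any real obstacle: the entire argument reduces to verifying the sign condition~\eqref{cond} for the sine function at the points $b^{-k}$, which is immediate from the basic geometry of $\sin$.
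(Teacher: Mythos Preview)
Your argument is correct and follows exactly the route indicated in the paper: verify that $\phi(t)=\sin(2\pi t)$ satisfies the hypothesis~\eqref{cond} of Proposition~\ref{condition prop}, and then invoke Theorem~\ref{thm}(c) to obtain $\langle w\rangle_t^{(q)}=t\,\mathbb{E}[|Z|^q]$ with $\mathbb{E}[|Z|^q]\in(0,\infty)$. The only cosmetic slip is that for $b=2$, $k=2$ one has $2\pi b^{-k}=\pi/2$, which lies on the boundary of the interval you wrote as open; this is harmless since $\sin(\pi/2)=1>0$.
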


Now we turn to the class of Takagi--van der Waerden functions, which corresponds to the case in which $\phi$ is  the tent map,
\begin{equation}\label{tent map}
\phi(t)=\min_{z\in\mathbb{Z}}|t-z|,\qquad t\in\mathbb{R}.
\end{equation}
For $b=2$, the following result is contained in~\cite[Theorem 2.1]{MishuraSchied2}.\footnote{Note that in the printed version of~\cite{MishuraSchied2}, there is a factor $2^{1-1/H}$ missing in the statement of that theorem.} It characterizes the law of $Z$ in terms  of an infinite Bernoulli convolution. Recall that the law of a random variable $\widetilde Z$ is a (symmetric) infinite Bernoulli convolution with parameter $\beta\in(-1,1)$  if there is an i.i.d.~sequence $(\widetilde Y_m)_{m=0,1,\dots}$ of $\{-1,+1\}$-valued random variables with a symmetric Bernoulli distribution such that 
$$\widetilde Z=\sum_{m=0}^\infty\beta^m\widetilde Y_m.
$$
On the other hand, part (b) of the following proposition yields in particular that if $b$ is odd,  the random variables $(Y_m)_{m\in\mathbb{N}}$ are no longer independent.

\begin{proposition}\label{tent prop}Let $\phi$ be as in~\eqref{tent map} and $1/b<|\alpha|<1$.
\begin{enumerate}
\item If $b$ is even, the random variables $(Y_m)_{m\in\mathbb{N}}$ form an i.i.d.~sequence of symmetric $\{-1,+1\}$-valued infinite Bernoulli random variables. In particular, for $Z$ as  in~\eqref{Z rv}, the law of  $\widetilde Z:=\alpha bZ$  is the (symmetric) infinite Bernoulli convolution with parameter $1/(\red{|\alpha |}b)$, and for $q=-\log_{|\alpha|} b$ we have $\<f\>_t^{(q)}=t(\red{|\alpha|} b)^{-q}\mathbb{E}[|\widetilde Z|^{q}]$.
\item If $b$ is odd, the $(Y_m)_{m\in\mathbb{N}}$ form a Markov chain on $\{-1,0,+1\}$ with initial distribution $\left[\frac{b-1}{2b},\frac{1}{b},\frac{b-1}{2b}\right]$  and transition matrix
\[
  P= \kbordermatrix{
    & -1 &0 & +1  \\
    -1 & \frac{b+1}{2b} & 0 & \frac{b-1}{2b} \\
    0& \frac{b-1}{2b} &\frac{1}{b}& \frac{b-1}{2b}  \\
    +1& \frac{b-1}{2b}& 0 & \frac{b+1}{2b} 
  }.
\]
\end{enumerate} \end{proposition}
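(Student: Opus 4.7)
The plan is to reduce Proposition~\ref{tent prop} to an explicit computation of the slope coefficients $\lambda_{m,k}$ for the tent map. Because $\phi$ has slope $+1$ on $[j,j+1/2]$ and slope $-1$ on $[j+1/2,j+1]$ for every integer $j$, and is symmetric about every half-integer, a direct inspection of where the interval $[kb^{-m},(k+1)b^{-m}]$ sits relative to $1/2$ gives
\[
\lambda_{m,k} = \begin{cases} +1 & \text{if } (k+1)b^{-m}\le 1/2,\\ -1 & \text{if } kb^{-m}\ge 1/2,\\ 0 & \text{if } kb^{-m}<1/2<(k+1)b^{-m}. \end{cases}
\]
The third, ``straddling'' case is vacuous when $b$ is even, and for $b$ odd it occurs at the single index $k=(b^m-1)/2$.

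For part (a), with $b$ even, I would use the decomposition $R_m=U_mb^{m-1}+R_{m-1}$ (where $R_{m-1}<b^{m-1}$) to conclude that $R_m<b^m/2$ if and only if $U_m<b/2$. Hence $Y_m=\lambda_{m,R_m}$ depends only on $U_m$: it takes the value $+1$ for $U_m\in\{0,\dots,b/2-1\}$ and $-1$ for $U_m\in\{b/2,\dots,b-1\}$. Symmetric Bernoulli distribution and independence of $(Y_m)$ then follow from uniformity and independence of the $U_m$'s. To identify $\widetilde Z=\alpha b Z=\sum_{m=0}^{\infty}(\alpha b)^{-m}Y_{m+1}$ with a symmetric Bernoulli convolution of parameter $1/(|\alpha|b)$, I would observe that when $\alpha<0$ the sequence $((-1)^m Y_{m+1})_{m\ge 0}$ still has the same i.i.d.\ symmetric $\pm1$ law, so the sign of $\alpha$ washes out. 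The formula $\<f\>_t^{(q)}=t(|\alpha|b)^{-q}\mathbb{E}[|\widetilde Z|^q]$ then follows from Theorem~\ref{thm}(c) via $\mathbb{E}[|Z|^q]=(|\alpha|b)^{-q}\mathbb{E}[|\widetilde Z|^q]$.

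For part (b), with $b$ odd, counting the three classes of indices $k\in\{0,\dots,b-1\}$ immediately yields the initial distribution of $Y_1$. For the Markov structure I would exploit the identity
\[
\frac{b^{m+1}-1}{2}=\frac{b-1}{2}\,b^m+\frac{b^m-1}{2},
\]
together with $R_{m+1}=U_{m+1}b^m+R_m$ and uniqueness of the $b$-adic digit expansion. A three-way split on $U_{m+1}$ then shows that $Y_{m+1}$ is a deterministic function of $(U_{m+1},Y_m)$: the $(b-1)/2$ digits with $U_{m+1}\le(b-3)/2$ force $Y_{m+1}=+1$, the $(b-1)/2$ digits with $U_{m+1}\ge(b+1)/2$ force $Y_{m+1}=-1$, and the single ``neutral'' digit $U_{m+1}=(b-1)/2$ leaves $Y_{m+1}=Y_m$. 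Since $U_{m+1}$ is independent of $\sigma(Y_1,\dots,Y_m)$, this yields both the Markov property and, by counting, the stated transition matrix.

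The main obstacle is not conceptual but clerical: the $b$-odd case requires tracking three boundary cases simultaneously, and the clean emergence of the Markov chain hinges on the algebraic identity above, which effectively isolates the leading $b$-adic digit of $R_{m+1}$ from the trailing ones. Once that identity is in hand, both parts of the proposition reduce to essentially a counting exercise.
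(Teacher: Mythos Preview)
Your proposal is correct. For part~(a) your argument coincides with what the paper does---the paper simply defers to the later Proposition~\ref{skew prop} on the skewed tent map, whose proof is exactly the digit decomposition $R_m=U_mb^{m-1}+R_{m-1}$ you outline.

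For part~(b) your route is a genuinely cleaner variant of the paper's. The paper fixes $k=R_{m-1}$ and computes the conditional probabilities $\mathbb{P}[Y_m=y\mid R_{m-1}=k]$ by bounding the quantity $\tfrac{b}{2}-(k+\tfrac12)b^{1-m}$ separately on the three ranges $k<\tfrac{b^{m-1}-1}{2}$, $k=\tfrac{b^{m-1}-1}{2}$, $k>\tfrac{b^{m-1}-1}{2}$, then observes that the answer depends only on $Y_{m-1}$. You instead use the identity $\tfrac{b^{m+1}-1}{2}=\tfrac{b-1}{2}\,b^m+\tfrac{b^m-1}{2}$ to exhibit the deterministic relation $Y_{m+1}=g(U_{m+1},Y_m)$ directly, from which both the Markov property and the transition matrix drop out by counting the three blocks of digits $U_{m+1}\le\tfrac{b-3}{2}$, $U_{m+1}=\tfrac{b-1}{2}$, $U_{m+1}\ge\tfrac{b+1}{2}$. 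The two arguments are equivalent in content, but yours isolates the structural reason---the leading $b$-adic digit decides the sign unless it is the neutral digit $(b-1)/2$, in which case the decision is inherited from the previous step---whereas the paper arrives at the same conclusion through three separate probability computations.
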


\begin{proof}
{\bf (a) $b$ is even.} This is a special case of Proposition~\ref{skew prop}.

{\bf (b) $b$ is odd.} The initial distribution of $Y_1$ is obvious. Next, we observe that $U_1,\dots, U_{m}$ can be recovered from $R_{m}$ so that
\begin{equation}\label{sigma fields}
\sigma(Y_1,\dots,Y_m)\subseteq\sigma(U_1,\dots, U_{m})=\sigma(R_{m})\qquad \text{for $m=1,2,\dots$.}
\end{equation}
Now we consider the event $\{Y_m=0\}$ and note that it coincides with $\{R_m=\frac{b^m-1}2\}$. But the latter event is equal to $\{U_1=\cdots=U_m=\frac{b-1}2\}$, which is in turn contained in  $\{R_{m-1}=\frac{b^{m-1}-1}2\}=\{Y_{m-1}=0\}$. It follows that
\begin{align*}
\mathbb P[Y_m=0|R_{m-1}]=\mathbb P\Big[U_m=\frac{b-1}2\,\Big]\Ind{\{Y_{m-1}=0\}}=\frac1b\Ind{\{Y_{m-1}=0\}}.
\end{align*}
In view of~\eqref{sigma fields}, this establishes the Markov property for the event $\{Y_m=0\}$ and gives the second column of the transition matrix $P$.

Next, we have  by~\eqref{lambdas} and~\eqref{R and Y}
that 
\[\{Y_m=1\}=\left\{R_mb^{-m}<\left(\frac{b^m-1}{2}\right)b^{-m}\right\}=\left\{R_m<\frac{b^m-1}{2}\right\}.
\]
Thus, the independence of $U_m$ and $R_{m-1}$ yields that for $k\in\{0,\dots, b^{m-1}-1\}$,
\begin{align}\label{cond prob eq}
\mathbb P[Y_m=1|R_{m-1}=k]&=\mathbb P\bigg[U_m<\frac b2-\Big(k+\frac12\Big)b^{1-m}\bigg].
\end{align}
If $0\le k<\frac12(b^{m-1}-1)$, which corresponds to $Y_{m-1}=1$, then
\begin{equation}\label{kb 1 eq}
\frac {b-1}2<\frac b2-\Big(k+\frac12\Big)b^{1-m}\le \frac b2.
\end{equation}
Since there is no integer in the interval $(\frac{b-1}2,\frac b2]$, we see from~\eqref{cond prob eq} that, whenever $0\le k<\frac12(b^{m-1}-1)$, the probability $\mathbb P[Y_m=1|R_{m-1}=k]$ is independent of $k$ and equal to 
$$\mathbb P\Big[U_m\le\frac12(b-1)\Big]=\frac{b+1}{2b}.
$$
Likewise, for $\frac12(b^{m-1}-1)\le k\le b^{m-1}-1$,  which corresponds to $Y_m=0$ or $Y_m=-1$, we get
\begin{equation}\label{kb 2 eq}
\frac b2-1+\frac1{2b^{m-1}}\le\frac b2-\Big(k+\frac12\Big)b^{1-m}\le\frac {b-1}2. 
\end{equation}
Again, there is no integer in the interval $[\frac b2-1+\frac1{2b^{m-1}},\frac {b-1}2)$, and so~\eqref{cond prob eq} implies that
$$\mathbb P[Y_m=1|R_{m-1}=k]=\mathbb P\bigg[U_m<\frac {b-1}2\bigg]=\frac{b-1}{2b}.
$$
Altogether, we have shown that 
\begin{align*}
\mathbb P[Y_m=1|R_{m-1}]=\frac{b+1}{2b}\Ind{\{Y_{m-1}=1\}}+ \frac{b-1}{2b}\Ind{\{Y_{m-1}=0\}\cup\{Y_{m-1}=-1\}}.
\end{align*}
In view of~\eqref{sigma fields}, this establishes the Markov property for the event $\{Y_m=1\}$ and gives the first column of the transition matrix $P$. The analogous result for $\{Y_m=-1\}$ follows by a symmetry argument.
\end{proof}

In Remark 1.7 of~\cite{ContPerkowski} it is conjectured that, if $p$ is an odd integer, the following \emph{signed  $p^{\text{th}}$ variation} of $f$,
\begin{equation}\label{signed pth variation}
\lim_{n\uparrow\infty}\sum_{k=0}^{\lfloor tb^n\rfloor}\big(f((k+1)b^{-n})-f(kb^{-n})\big)^p,\qquad t\in[0,1],
\end{equation}
 will typically vanish for all $t\in[0,1]$.
To discuss this conjecture, we will now study the fractal functions $f$ arising from the following 
skewed version of the tent map,
\begin{equation}\label{skewed tent map}
\phi(t):=\begin{cases}t\frac b{2\ell}&\text{if $0\le t\le \ell/b$,}\\
(1-t)\frac{b}{2(b-\ell)}&\text{if $\ell/b\le t\le1$,}
\end{cases}
\end{equation}
where  $b\in\{2,3,\dots\}$ and $\ell\in\{1,\dots,b-1\}$ are fixed. Then we extend $\phi$ to all of $\mathbb R$ by periodicity. Note that if $b$ is even and $\ell=b/2$, then $\phi$ is equal to the standard tent map~\eqref{tent map}, and so the following Proposition~\ref{skew prop} contains Proposition~\ref{tent prop} (a) as a special case. See Figure~\ref{skewed tent map fig} for  plots of two fractal functions $f$   corresponding to specific choices of $b$ and $\ell$ in~\eqref{skewed tent map}.

\begin{figure}[h]
\begin{minipage}[b]{8cm}
\begin{overpic}[height=5cm]{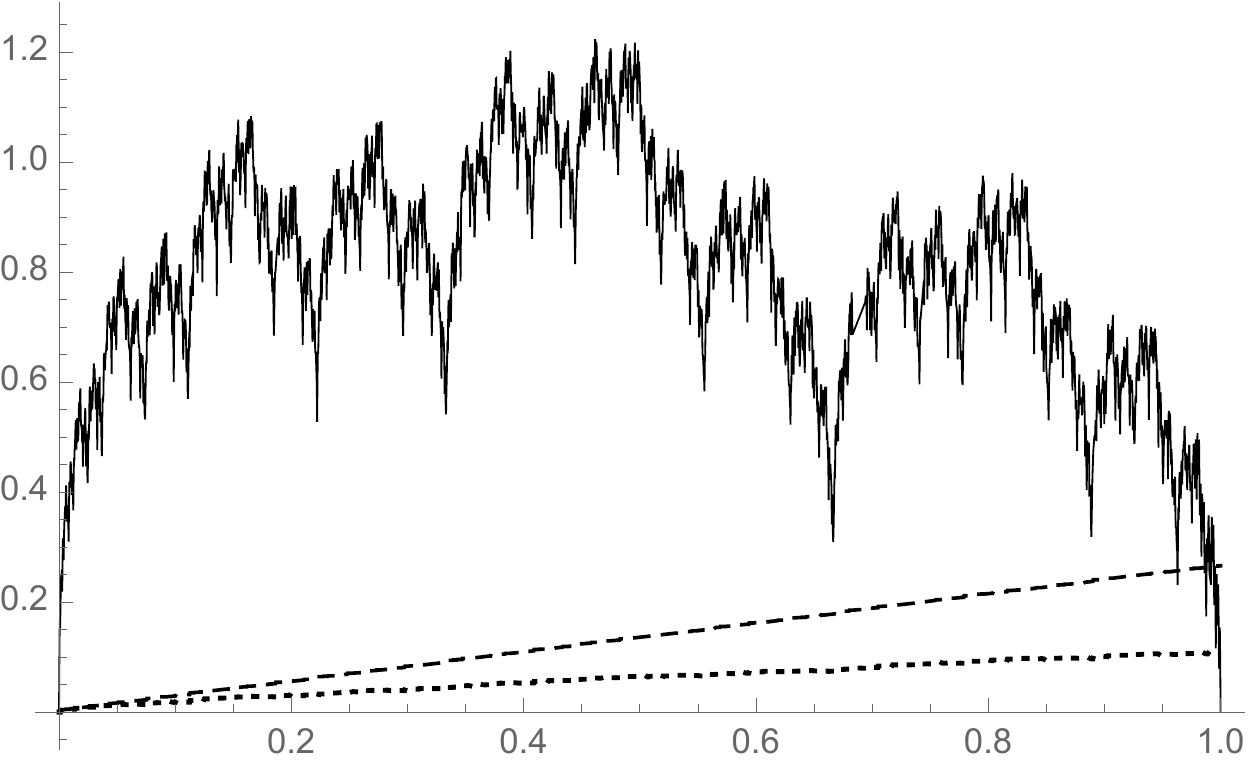}
\end{overpic}
\end{minipage}\qquad\qquad
\begin{minipage}[b]{8cm}
\begin{overpic}[height=5cm]{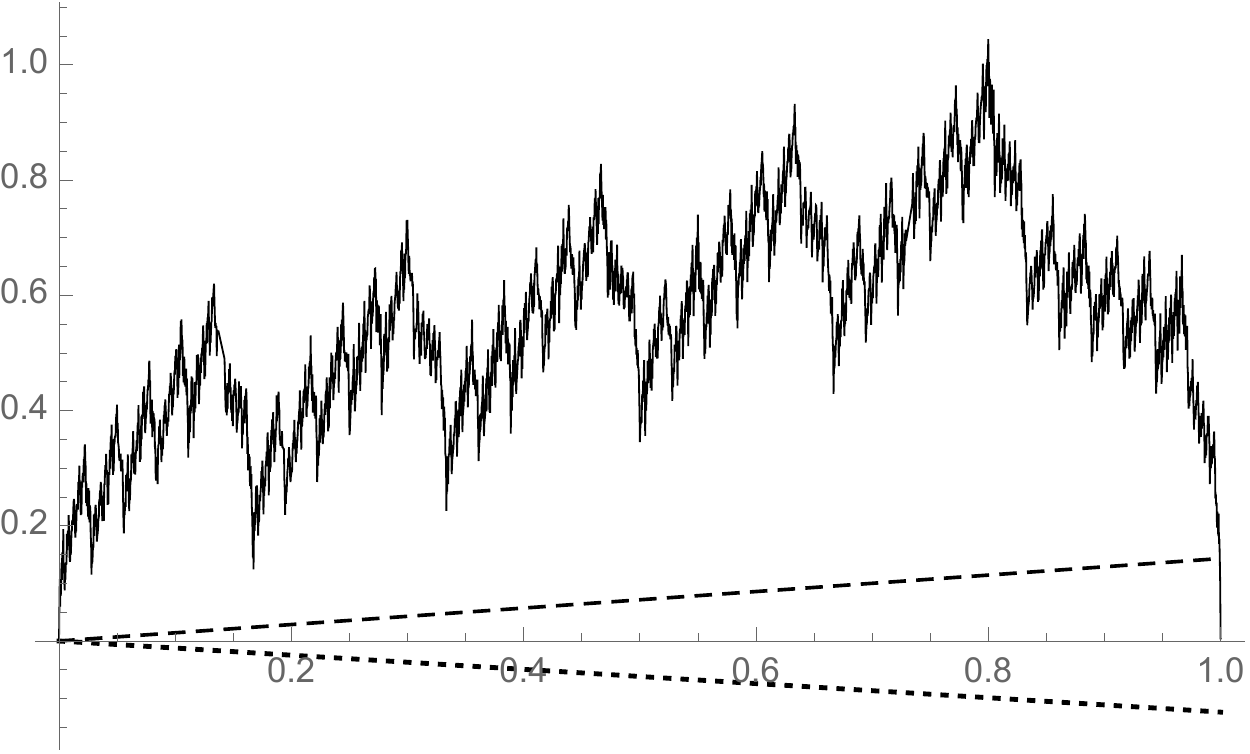}
\end{overpic}
\end{minipage}\caption{Functions $f$ for the skewed tent map~\eqref{skewed tent map} for $b=3$, $\ell=1$ (left) and $b=6$, $\ell=5$ (right) with $\alpha=b^{-1/3}$ in both cases. The dashed and dotted lines, respectively, are the  functions $$t\longmapsto \sum_{k=0}^{\lfloor tb^6\rfloor}\big|f((k+1)b^{-6})-f(kb^{-6})\big|^3\quad\text{and}\quad t\longmapsto \sum_{k=0}^{\lfloor tb^6\rfloor}\big(f((k+1)b^{-6})-f(kb^{-6})\big)^3.$$}
\label{skewed tent map fig}
\end{figure}

\begin{proposition}\label{skew prop} Let $\phi$ be as in~\eqref{skewed tent map} for given $b\in\{2,3,\dots\}$ and $\ell\in\{1,\dots,b-1\}$. Define
\begin{equation}\label{munu}
\mu:=\frac{-b}{2(b-\ell)}\quad\text{and}\quad \nu:=\frac{b}{2\ell}.
\end{equation}
Then $(Y_m)_{m\in\mathbb N}$ is an i.i.d.~sequence of $\{ \mu,\nu\}$-valued random variables with  $\mathbb P[Y_m=\nu]=\ell/b$ and $\mathbb E[Y_m]=0$. 
\end{proposition}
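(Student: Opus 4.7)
The plan is to read off the values of $\lambda_{m,k}$ directly from the piecewise linear structure of $\phi$ and then exploit the base-$b$ digit representation $R_m = U_1 + U_2 b + \cdots + U_m b^{m-1}$ to show that $Y_m$ depends only on the last digit $U_m$. This last observation will immediately give independence of the $(Y_m)$ and let us compute the law and expectation.

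First I would check that for every $m\geq 1$ and every $k\in\{0,\ldots,b^m-1\}$, the function $\phi$ is affine on the whole subinterval $[kb^{-m},(k+1)b^{-m}]$ with slope equal to either $\nu$ or $\mu$. The only corners of $\phi$ on $[0,1]$ are at $0$, $\ell/b$, and $1$; but $\ell/b=\ell b^{m-1}\cdot b^{-m}$ is itself a partition point of $\mathbb{T}_m$, so no corner lies in the interior of any of these subintervals. Consequently, by the defining formula~\eqref{skewed tent map},
\[
\lambda_{m,k}=\begin{cases}\nu,&0\leq k<\ell b^{m-1},\\ \mu,&\ell b^{m-1}\leq k\leq b^m-1.\end{cases}
\]

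Next I would translate this into a statement about $Y_m=\lambda_{m,R_m}$. Writing $R_m=U_m b^{m-1}+R_{m-1}$ with $0\leq R_{m-1}\leq b^{m-1}-1$, I observe that $U_m<\ell$ forces $R_m\leq(\ell-1)b^{m-1}+(b^{m-1}-1)<\ell b^{m-1}$, whereas $U_m\geq\ell$ forces $R_m\geq \ell b^{m-1}$. Thus
\[
Y_m=\nu\,\mathbf{1}_{\{U_m<\ell\}}+\mu\,\mathbf{1}_{\{U_m\geq\ell\}},
\]
so $Y_m$ is a deterministic function of $U_m$ alone. Because the $U_m$ are i.i.d., this immediately yields that $(Y_m)_{m\in\mathbb{N}}$ is an i.i.d.\ sequence taking values in $\{\mu,\nu\}$.

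Finally I would compute $\mathbb{P}[Y_m=\nu]=\mathbb{P}[U_m<\ell]=\ell/b$ from the uniform distribution of $U_m$ on $\{0,\ldots,b-1\}$, and then plug the values of $\mu,\nu$ from~\eqref{munu} into the expectation:
\[
\mathbb{E}[Y_m]=\nu\cdot\frac{\ell}{b}+\mu\cdot\frac{b-\ell}{b}=\frac{b}{2\ell}\cdot\frac{\ell}{b}-\frac{b}{2(b-\ell)}\cdot\frac{b-\ell}{b}=\frac12-\frac12=0.
\]
There is no real obstacle here; the only mildly subtle point is verifying that the corner $\ell/b$ lands exactly on a node of $\mathbb{T}_m$ for every $m\geq 1$, which is where the hypothesis $\ell\in\{1,\ldots,b-1\}$ is used in an essential way.
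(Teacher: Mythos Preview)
Your proof is correct and follows essentially the same route as the paper's: both compute $\lambda_{m,k}$ from the piecewise-linear structure of $\phi$, observe that the break at $\ell/b$ lies on every $b$-adic grid, and then use the decomposition $R_m=U_m b^{m-1}+R_{m-1}$ to see that the event $\{Y_m=\nu\}$ is determined by $U_m$ alone. Your formulation is slightly more direct in that you express $Y_m$ explicitly as a function of $U_m$, whereas the paper phrases the independence step via $\mathbb{P}[Y_m=\nu\,|\,R_{m-1}]$, but this is a cosmetic difference only.
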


\begin{proof}Fix $m\in\mathbb{N}$ and define the function
$$\psi(x):=b^{m}\big(\phi(x+b^{-m})-\phi(x)\big).
$$
For $x\in\{kb^{-m}:k\in\mathbb{Z}\}$, we have $\psi(x)\in\{-\frac b{2(b-\ell)},\frac b{2\ell}\}$. More precisely, we have $\psi(kb^{-m})=\frac b{2\ell}=\nu$ for $k=0,\dots,\ell b^{m-1}-1$ and $\psi(kb^{-m})= -\frac b{2(b-\ell)}=\mu$ for $k=\ell b^{m-1},\dots, b^m-1$. 
 Moreover, $R_m\in\{0,\dots, \ell b^{m-1}-1\}$ if and only if $U_m \in\{0,\dots,\ell-1\}$. It follows that 
$$\mathbb{P}[Y_m=\nu\,|\,R_{m-1}]=\mathbb{P}[U_m\in\{0,\dots,\ell-1\}\,|\,R_{m-1}]=\mathbb{P}[U_m\in\{0,\dots,\ell-1\}]=\frac\ell b.$$
In view of~\eqref{sigma fields}, this proves that $Y_m$ is independent of $Y_1,\dots, Y_{m-1}$ and has the claimed  distribution. 
\end{proof}

With the preceding proposition, we are able to prove  the following result on the signed $p^{\text{th}}$ variation~\eqref{signed pth variation} of the functions $f$ arising from the skewed tent map~\eqref{skewed tent map}.

\begin{theorem}\label{signed var thm}In the context of Proposition~\ref{skew prop}, suppose that $\alpha\in(-1,1)$ is such that $|\alpha|>1/b$ and $q=-\log_{|\alpha|}b$ is an odd integer. Then:
\begin{enumerate}
\item If $b$ is even and $\ell=b/2$, the signed $q^{\text{th}}$ variation~\eqref{signed pth variation} exists and vanishes identically.
\item If $\alpha>0$, then the signed $q^{\text{th}}$ variation of $f$ exists and is given by 
\begin{equation}\label{signed qth variation}
\lim_{n\uparrow\infty}\sum_{k=0}^{\lfloor tb^n\rfloor}\big(f((k+1)b^{-n})-f(kb^{-n})\big)^q=t\cdot\mathbb E\big[Z^q\big],\qquad t\in[0,1],
\end{equation}
where $Z$ is as in Theorem~\ref{thm}. Moreover, $\mathbb E[Z^q]$
 is strictly positive if $\ell<b/2$ and strictly negative if $\ell>b/2$.
 \item If $\alpha<0$ and $\ell\neq b/2$, then 
 \begin{equation}\label{nonexist signed qth variation}
\lim_{n\uparrow\infty}\,(-1)^n\sum_{k=0}^{\lfloor tb^n\rfloor}\big(f((k+1)b^{-n})-f(kb^{-n})\big)^q=t\cdot\mathbb E\big[Z^q\big],\qquad t\in[0,1].
\end{equation}
In particular, the signed $q^{\text{th}}$ variation of $f$ exists only along $(\mathbb T_{2n})_{n\in\mathbb N}$ or along $(\mathbb T_{2n+1})_{n\in\mathbb N}$. 
\end{enumerate}
\end{theorem}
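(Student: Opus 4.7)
The plan is to build a signed analog of the machinery in Section~\ref{results section}. Since $q$ is an odd positive integer, we may repeat the proof of Lemma~\ref{lemma 1} \emph{without} taking absolute values; this yields
$$\sum_{k=0}^{b^n-1}\bigl(f((k+1)b^{-n})-f(kb^{-n})\bigr)^q \;=\; \alpha^{nq} b^n\,\mathbb E\!\left[\Bigl(\sum_{m=1}^n (\alpha b)^{-m} Y_m\Bigr)^{\!q}\right].$$
The hypothesis $q=-\log_{|\alpha|}b$ is equivalent to $|\alpha|^q b=1$, so the prefactor simplifies to $\alpha^{nq} b^n=(\operatorname{sgn}\alpha)^{nq}=(\operatorname{sgn}\alpha)^n$. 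Bounded convergence (each $|Y_m|\le C$) then shows the inner expectation tends to $\mathbb E[Z^q]$. Hence at $t=1$ the signed sum converges to $\mathbb E[Z^q]$ when $\alpha>0$ and alternates as $(-1)^n\mathbb E[Z^q]$ when $\alpha<0$.

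To extend from $t=1$ to arbitrary $t\in[0,1]$, I would prove a signed counterpart of Lemma~\ref{additive p-var lemma}: if $V_{q,t,n}(g)\to 0$ and $V_{q,t,n}(h)$ stays bounded, then the difference between the signed $q^{\text{th}}$-variation sums of $g+h$ and $h$ tends to $0$. This follows from the expansion $(x+y)^q-y^q=\sum_{j=1}^q\binom{q}{j}x^jy^{q-j}$ together with H\"older's inequality applied with exponents $q/j$ and $q/(q-j)$ to each summand. Combined with $V_{q,t,n}(\phi)\to 0$ (since $\phi$ is Lipschitz and $q>1$), the self-similarity $f(t)=\phi(t)+\alpha f(bt)$ on $[0,1/b]$, and the shift argument used in the proof of Theorem~\ref{thm}(c), this transfers the signed identity to every $b$-adic endpoint $kb^{-\nu}$. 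A sandwich argument in which the remainder is controlled by the absolute $q^{\text{th}}$-variation increment then extends the result to all $t\in[0,1]$, with the prefactor $(\operatorname{sgn}\alpha)^n$ carried through unchanged.

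Parts (a) and (c) now fall out quickly. For (a), with $b$ even and $\ell=b/2$, Proposition~\ref{skew prop} gives $\mu=-\nu$ and $\mathbb P[Y_m=\nu]=\tfrac12$, so each $Y_m$ is symmetric and hence $Z\stackrel{d}{=}-Z$, forcing $\mathbb E[Z^q]=0$. Part (c) follows immediately from the $(-1)^n$ prefactor combined with the fact that $\mathbb E[Z^q]\neq 0$ (established in (b)), since this precludes convergence of the full sequence and yields \eqref{nonexist signed qth variation} along the even and odd sub-sequences of partitions.

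The main obstacle is the sign claim in part (b). For $\alpha>0$, write $c:=1/(\alpha b)\in(0,1)$. The fixed-point identity $Z\stackrel{d}{=} c(Y_1+Z)$, with $Y_1$ independent of $Z$ on the right, yields the moment recursion
$$(1-c^k)\,m_k \;=\; c^k\sum_{j=2}^{k}\binom{k}{j}\mathbb E[Y^j]\,m_{k-j},\qquad m_k:=\mathbb E[Z^k],$$
the $j=1$ term dropping because $\mathbb E[Y]=0$. A direct computation with the two-point law of $Y$ shows $\mathbb E[Y^j]>0$ for every even $j$ while $\operatorname{sgn}\mathbb E[Y^j]=\operatorname{sgn}(b-2\ell)$ for every odd $j$. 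I would then argue by simultaneous induction on $k$ that $m_{2k}>0$ and $\operatorname{sgn}(m_{2k+1})=\operatorname{sgn}(b-2\ell)$: in the step for an odd index, every nonzero summand on the right-hand side is a product of an odd $Y$-moment with an even $Z$-moment or of an even $Y$-moment with an odd $Z$-moment, so all contributions share the common sign $\operatorname{sgn}(b-2\ell)$ and no cancellation occurs. It is this sign control for the non-symmetric infinite Bernoulli convolution, rather than the analytic machinery for $q^{\text{th}}$-variation sums, that is the delicate step and is flagged in the paper as being of independent interest.
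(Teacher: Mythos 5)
Your proposal is correct and mirrors the paper's own proof: the same signed version of Lemma~\ref{lemma 1} with the prefactor $(\operatorname{sgn}\alpha)^n$, the same moment recursion obtained from the fixed-point identity $Z\stackrel{d}{=}\gamma(Y_0+Z)$, a sign induction on the odd moments, and a signed perturbation lemma plus self-similarity to pass from $t=1$ to general $t$. The only (harmless) deviations are technical: you control the cross terms via H\"older rather than Young's inequality in your signed analogue of Lemma~\ref{additive p-var lemma}, and your sign induction tracks the parities of $\mathbb E[Y^j]$ and $m_j$ directly instead of normalizing $\nu=1/p$, $-\mu=1/(1-p)$ as in the paper's Lemma~\ref{Bernoulli lemma}.
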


The proof of the preceding theorem is based on the following auxiliary results on the moments of general non-symmetric infinite Bernoulli convolutions.  The first is a recursive formula for the moments of a general non-symmetric infinite Bernoulli convolution. A formula for the moments of a standard (symmetric) infinite Bernoulli convolution was given in~\cite{EscribanoEtAl}.

\begin{lemma}\label{Bernoulli moment lemma} Suppose that $\mu,\nu\in\mathbb R$, $p\in(0,1)$, and $(\widetilde Y_m)_{m\in\mathbb N}$ is an i.i.d.~sequence of  $\{\mu,\nu\}$-valued random variables with $\mathbb P[\widetilde Y_m=\nu]=p$. For $\gamma\in(-1,1)$, let $\widetilde Z$ be the random variable
\begin{equation}
\widetilde Z=\sum_{m=1}^\infty\gamma^m\widetilde Y_m.
\end{equation}
Then, for $k\in\mathbb N$, the $k^{\text{th}}$ moment of $\widetilde Z$ is given by the following recursive formula,
\begin{equation}\label{Bernoulli lemma eq}
\mathbb E[\widetilde Z^k]=\frac{\gamma^k}{1-\gamma^k}\sum_{j=0}^{k-1}{k\choose j}\Big(p\nu^{k-j}+(1-p)\mu^{k-j}\Big)\mathbb E[\widetilde Z^j].
\end{equation}
\end{lemma}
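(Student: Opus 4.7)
The approach is to exploit the self-similar structure of the series defining $\widetilde Z$. Since $|\widetilde Y_m|\le C:=\max(|\mu|,|\nu|)$, the random variable $\widetilde Z$ is bounded by $C|\gamma|/(1-|\gamma|)$, so all moments $\mathbb E[\widetilde Z^k]$ are finite and the manipulations below are justified.

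Write $\widetilde Z=\gamma\widetilde Y_1+\gamma\widetilde Z'$, where
\[
\widetilde Z':=\sum_{m=1}^\infty \gamma^m\widetilde Y_{m+1}.
\]
Because $(\widetilde Y_m)_{m\in\mathbb N}$ is i.i.d., $\widetilde Z'$ has the same distribution as $\widetilde Z$ and is independent of $\widetilde Y_1$. Expanding by the binomial theorem gives
\[
\widetilde Z^k=\gamma^k\sum_{j=0}^k\binom{k}{j}\widetilde Y_1^{k-j}(\widetilde Z')^j.
\]
Taking expectations, using independence of $\widetilde Y_1$ and $\widetilde Z'$, and recalling that $\mathbb E[\widetilde Y_1^{k-j}]=p\nu^{k-j}+(1-p)\mu^{k-j}$ while $\mathbb E[(\widetilde Z')^j]=\mathbb E[\widetilde Z^j]$, we arrive at
\[
\mathbb E[\widetilde Z^k]=\gamma^k\sum_{j=0}^k\binom{k}{j}\bigl(p\nu^{k-j}+(1-p)\mu^{k-j}\bigr)\mathbb E[\widetilde Z^j].
\]
Finally, the $j=k$ term on the right equals $\gamma^k\,\mathbb E[\widetilde Z^k]$ (since $p\nu^0+(1-p)\mu^0=1$ and $\binom{k}{k}=1$). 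Moving it to the left and dividing by $1-\gamma^k$, which is nonzero because $\gamma\in(-1,1)$ forces $|\gamma^k|<1$, yields the claimed identity~\eqref{Bernoulli lemma eq}.

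There is no real obstacle here; the only two things to verify carefully are the finiteness of all moments (handled by boundedness of $\widetilde Z$) and the identification of $\widetilde Z'$ as an independent copy of $\widetilde Z$, which follows directly from the i.i.d.\ property of the $\widetilde Y_m$.
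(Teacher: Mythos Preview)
Your proof is correct and follows essentially the same route as the paper: both exploit the self-similar decomposition $\widetilde Z\stackrel{d}{=}\gamma(\widetilde Y+\widetilde Z)$ with $\widetilde Y$ an independent copy of $\widetilde Y_1$, expand via the binomial theorem, and isolate the $j=k$ term. Your explicit remark on boundedness of $\widetilde Z$ (hence finiteness of all moments) is a welcome addition that the paper leaves implicit.
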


\begin{proof} Note that $\widetilde Z$ has the same law as $\gamma(\widetilde Z+\widetilde Y_{0})$, where $\widetilde Y_{0}$ is such that $(\widetilde Y_m)_{m=0,1,\dots}$ is an i.i.d.~sequence. Conditioning on $\widetilde Y_{0}$ hence yields that for  $k\in\mathbb N$,
\begin{align*}
\mathbb E[\widetilde Z^k]&=p\mathbb E[(\gamma \widetilde Z+\gamma \nu)^k]+(1-p)\mathbb E[(\gamma \widetilde Z+\gamma \mu)^k]\\
&=\gamma^k\mathbb E\bigg[\sum_{j=0}^k{k\choose j}p\widetilde Z^j\nu^{k-j}+\sum_{j=0}^k{k\choose j}(1-p)\widetilde Z^j\mu^{k-j}\bigg]\\
&=\gamma^k\mathbb E[\widetilde Z^k]+ \gamma^k\sum_{j=0}^{k-1}{k\choose j}\Big(p\nu^{k-j}+(1-p)\mu^{k-j}\Big)\mathbb E[\widetilde Z^j].
\end{align*}
This yields~\eqref{Bernoulli lemma eq}.
\end{proof}

\begin{example}In Theorem~\ref{signed var thm}, suppose that $\alpha=b^{-1/3}$, so that $q=3$, and then let $\gamma=1/(\alpha b)$, $p=\ell/b$, and $\mu$ and $\nu$ as in~\eqref{munu}. Our formula~\eqref{Bernoulli lemma eq} gives for $k=2$ that $\mathbb E[ Z^2]=\gamma^2/(1-\gamma^2)$. Hence, for $k=3$ we get 
$$\mathbb E[ Z^3]=\frac{b^3(b-2\ell)}{8(b^2-1)\ell^2(b-\ell)^2}.
$$
When taking $b=3$ and $\ell=1$ as in the left-hand panel of Figure~\ref{skewed tent map fig}, we get $\mathbb E[ Z^3]=27/256$. For $b=6$ and $\ell=5$ as in the right-hand panel of Figure~\ref{skewed tent map fig}, we get $\mathbb E[ Z^3]=-875/6912\approx -0.1266$.
\end{example}

In the context of Theorem~\ref{signed var thm}, the random variables $Y_m$ are centered. It turns out that in this situation the odd moments of $Z$ have a common sign as long as $\alpha>0$. This is the content of the following lemma. 

\begin{lemma}\label{Bernoulli lemma}In the setting of  Lemma~\ref{Bernoulli moment lemma}, suppose in addition that $\mu<0<\nu$, $\mathbb E[\widetilde Y_m]=0$,  and $\gamma\in(0,1)$.  Then, for any given odd number $k\ge3$,
\begin{enumerate}
\item $\mathbb E[\widetilde Z^k]=0$ if and only if $\nu=-\mu$;
\item $\mathbb E[\widetilde Z^k]>0$ if and only if $\nu>-\mu$;
\item $\mathbb E[\widetilde Z^k]<0$ if and only if $\nu<-\mu$.
\end{enumerate}
\end{lemma}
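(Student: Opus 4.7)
The plan is to combine the recursion from Lemma~\ref{Bernoulli moment lemma} with an induction on odd $k\ge 3$. As a first step I would analyse the moments $a_k:=\mathbb E[\widetilde Y_m^k]=p\nu^k+(1-p)\mu^k$ of $\widetilde Y_m$ itself. The centering assumption $a_1=0$ is equivalent to $(1-p)\mu=-p\nu$, and plugging this back in gives
\begin{equation*}
a_k= p\nu\bigl(\nu^{k-1}-\mu^{k-1}\bigr)\qquad\text{for $k\ge 1$.}
\end{equation*}
For odd $k\ge 3$, $k-1$ is even and $\mu^{k-1}=(-\mu)^{k-1}$, hence $\operatorname{sign}(a_k)=\operatorname{sign}(\nu-(-\mu))=\operatorname{sign}(\nu+\mu)$; in particular $a_k=0$ iff $\nu=-\mu$. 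Evidently $a_k>0$ for every even $k\ge 2$.

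Second, the symmetric case $\nu=-\mu$ is handled by symmetry alone: then $p=1/2$ and $\widetilde Y_m\in\{\pm\nu\}$ with equal probability, so $\widetilde Z$ has a symmetric distribution and all of its odd moments vanish. For the two genuinely non-symmetric cases it suffices to treat $\nu>-\mu$; the case $\nu<-\mu$ then follows by replacing $\widetilde Y_m$ with $-\widetilde Y_m$, which swaps the parameters via $(\mu,\nu,p)\leftrightarrow(-\nu,-\mu,1-p)$ (and preserves the hypotheses of the lemma), and noting that $-\widetilde Z$ has odd moments $-\mathbb E[\widetilde Z^k]$.

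So suppose $\nu>-\mu$ and write $m_j:=\mathbb E[\widetilde Z^j]$. I would prove $m_k>0$ by induction on odd $k\ge 3$. The prefactor $\gamma^k/(1-\gamma^k)$ in~\eqref{Bernoulli lemma eq} is strictly positive since $\gamma\in(0,1)$, so it is enough to check that every summand $\binom{k}{j}a_{k-j}m_j$ is nonnegative and that at least one is strictly positive. I would split according to the parity of $j$: when $j$ is even with $0\le j\le k-3$, $m_j\ge 0$ (an even moment) and $a_{k-j}$ is an odd moment of $\widetilde Y_m$ of order $\ge 3$, hence $a_{k-j}>0$ by the first step; when $j$ is odd with $1\le j\le k-2$, $a_{k-j}$ is an even moment of $\widetilde Y_m$ and is therefore positive, while $m_j\ge 0$ by the induction hypothesis (with $m_1=0$ and $m_j>0$ for odd $j$ in $[3,k-2]$); the only remaining index $j=k-1$ is even and gives $a_1=0$, so that term drops out. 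The $j=0$ summand contributes $a_k>0$, which secures the strict inequality $m_k>0$. The three sign cases above are mutually exclusive and exhaustive, so the biconditionals~(a)--(c) follow at once.

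The argument is essentially combinatorial bookkeeping once the recursion of Lemma~\ref{Bernoulli moment lemma} is in hand; no hard estimates are required. The only real pitfall is to keep straight which $a_{k-j}$ are odd (carrying the sign of $\nu+\mu$) versus even (always positive), and to notice that the centering $a_1=0$ is precisely what kills the otherwise problematic term $j=k-1$. Without this cancellation the induction would not close, which is why the centering assumption $\mathbb E[\widetilde Y_m]=0$ is indispensable.
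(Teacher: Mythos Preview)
Your proof is correct and follows essentially the same strategy as the paper: reduce to the case $\nu>-\mu$ by symmetry, then feed the recursion~\eqref{Bernoulli lemma eq} with the sign information on $a_{k-j}=p\nu^{k-j}+(1-p)\mu^{k-j}$ and run an induction. The only cosmetic differences are that the paper first normalizes to $\nu=1/p$, $\mu=-1/(1-p)$ (whereas you use the factorization $a_k=p\nu(\nu^{k-1}-\mu^{k-1})$ directly), and that the paper inducts over all $k$ to get $m_k\ge 0$ before extracting strict positivity from the even-moment terms, while you induct only over odd $k$ and read off strict positivity from the $j=0$ summand.
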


\begin{proof}
If $\nu=-\mu$, then $\widetilde Z$ has a symmetric distribution and hence $\mathbb E[\widetilde Z^k]=0$. Thus it suffices to establish the implication \lq\lq$\nu>-\mu\Rightarrow \mathbb E[\widetilde Z^k]>0$\rq\rq, because the corresponding implication in (c) then follows by considering the random variables $-\widetilde Y_m$. So let us assume that $\nu>-\mu$. The fact that the $\widetilde Y_m$ are centered allows us to assume without loss of generality that $\nu=1/p$ and $-\mu=1/(1-p)$, for otherwise we multiply all random variables with $1/(\nu p)$. Then~\eqref{Bernoulli lemma eq} becomes
\begin{equation}\label{special Bernoulli lemma eq} 
\mathbb E[\widetilde Z^k]=\frac{\gamma^k}{1-\gamma^k}\sum_{j=0}^{k-1}{k\choose j}\Big(p^{1+j-k}+(-1)^{k-j}(1-p)^{1+j-k}\Big)\mathbb E[\widetilde Z^j].
\end{equation}
Our assumption $\nu>-\mu$ implies that $0<p<1/2$  so that  $p^{1+j-k}>(1-p)^{1+j-k}$ for $j=0,\dots,k-2$. Moreover, we have $\mathbb E[\widetilde Z^j]\ge0$ for $j=0,1,2$. Hence,~\eqref{Bernoulli lemma eq} and induction on $k$ yield that $\mathbb E[\widetilde Z^k]\ge0$ for all $k$. Since moreover $\mathbb E[\widetilde Z^j]>0$ for all even $j$, the right-hand side of \eqref{special Bernoulli lemma eq}  is strictly positive for $k\ge3$.\end{proof}

For the proof of Theorem~\ref{signed var thm}, it will be convenient to introduce the following notations. If $g\in C[0,1]$,  $n\in\mathbb{N}$, and $k\in\{0,\dots, b^n-1\}$, we write
$$\Delta_{n,k}g:=g((k+1)b^{-n})-g(kb^{-n}).
$$
Next, in analogy to~\eqref{Vptng} we define for $p\in\mathbb N$,
$$
\widehat V_{p,n}(g):=\sum_{k=0}^{b^n-1}(\Delta_{n,k}g)^p.
$$
Lemma~\ref{additive p-var lemma} does not work for signed $p^{\text{th}}$ variation. We will therefore need the following alternative argument. 

\begin{lemma}\label{signed additive p-var lemma}
Suppose  that $g\in C[0,1]$ is a function of bounded variation  and
 $p\in\{2,3,\dots\}$. If for some  $q>p$ a function $h\in C[0,1]$ has vanishing $q^{\text{th}}$ variation along the $b$-adic partitions \eqref{b-adic partitions},  then  the limit $\lim_n \widehat V_{p,n}(h)$ exists if and only if $\lim_n \widehat V_{p,n}(g+h)$ exists, and, in this case, both limits are equal.
\end{lemma}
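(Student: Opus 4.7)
The plan is to expand $(\Delta_{n,k}(g+h))^p$ via the binomial theorem and show that every term other than $(\Delta_{n,k}h)^p$ becomes negligible after summing over $k$. Concretely, the lemma will follow once we verify
$$\widehat V_{p,n}(g+h) - \widehat V_{p,n}(h) \;=\; \sum_{j=1}^{p}\binom{p}{j}\sum_{k=0}^{b^n-1}(\Delta_{n,k}g)^j(\Delta_{n,k}h)^{p-j} \;\longrightarrow\; 0,$$
since then $\widehat V_{p,n}(h)$ and $\widehat V_{p,n}(g+h)$ converge together to the same limit.

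The pure $g$-contribution (the term $j=p$) is the easiest to dispense with: any continuous function of bounded variation has vanishing $p^{\text{th}}$ variation for $p>1$, via the elementary estimate
$$\Big|\sum_k (\Delta_{n,k}g)^p\Big| \;\le\; \Big(\max_k |\Delta_{n,k}g|\Big)^{p-1}V(g),$$
where $V(g)$ denotes the total variation of $g$; the maximum vanishes by uniform continuity of $g$ on $[0,1]$.

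For each mixed term with $1 \le j \le p-1$, I would apply H\"older's inequality with conjugate exponents $a = q/(q-p+j)$ and $b = q/(p-j)$, yielding
$$\sum_k |\Delta_{n,k}g|^j |\Delta_{n,k}h|^{p-j} \;\le\; \Big(\sum_k |\Delta_{n,k}g|^{s}\Big)^{(q-p+j)/q}\Big(\sum_k |\Delta_{n,k}h|^q\Big)^{(p-j)/q},$$
where $s := jq/(q-p+j)$. The $h$-factor is $o(1)$ by the hypothesis that $h$ has vanishing $q^{\text{th}}$ variation along the $b$-adic partitions. Under the standing assumptions $q > p \ge 2$ and $1 \le j \le p-1$, a direct check gives $s > 1$, so the $g$-factor stays bounded (in fact also tends to $0$) by the same bounded-variation estimate displayed above.

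I do not foresee any serious obstacle: the argument is essentially a one-line combination of the bounded-variation bound and the vanishing $q^{\text{th}}$ variation of $h$. The only point deserving careful attention is verifying that the H\"older exponents $a, b$ deliver exactly the $q^{\text{th}}$ power on the $h$-side and an exponent $\ge 1$ on the $g$-side for every $j \in \{1,\dots,p-1\}$; both verifications reduce to the elementary inequalities $q > p - j$ and $q(j-1) \ge j - p$, which hold under our hypotheses.
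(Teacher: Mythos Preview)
Your proof is correct and follows essentially the same route as the paper's: expand $(\Delta_{n,k}(g+h))^p$ via the binomial theorem, dispose of the pure $g$-term using bounded variation plus continuity, and kill the mixed terms with a convexity inequality. The only difference is cosmetic---you bound each mixed term multiplicatively via H\"older's inequality (landing exactly on $V_{q}(h)$), whereas the paper bounds it additively via Young's inequality (landing on $V_{\ell q}(h)$ for various $\ell$); both reduce to the same two facts, $V_{s,n}(g)\to0$ for $s>1$ and the vanishing $q^{\text{th}}$ variation of $h$.
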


\begin{proof}Applying Young's inequality with $q$ and $r:=q/(q-1)$ in the fourth step of the following estimate yields
\begin{align*}
\big|\widehat V_{p,n}(g+h)-\widehat V_{p,n}(h)\big|&\le\sum_{k=0}^{b^n-1}\big|(\Delta_{n,k}(g+h))^p-(\Delta_{n,k}h)^p\big|=\sum_{k=0}^{b^n-1}\bigg|\sum_{\ell=0}^{p-1}{p\choose \ell}(\Delta_{n,k}g)^{p-\ell}(\Delta_{n,k}h)^\ell\bigg|\\
&\le \sum_{k=0}^{b^n-1}|\Delta_{n,k}g|^{p}+ \sum_{k=0}^{b^n-1}\sum_{\ell=1}^{p-1}{p\choose \ell}|\Delta_{n,k}g|^{p-\ell}|\Delta_{n,k}h|^\ell\\
&\le  \sum_{k=0}^{b^n-1}|\Delta_{n,k}g|^{p}+\sum_{k=0}^{b^n-1}\sum_{\ell=1}^{p-1}{p\choose \ell}\bigg(\frac1r|\Delta_{n,k}g|^{(p-\ell)r}+\frac1q |\Delta_{n,k}h|^{\ell q}\bigg)\\
&=V_{n,1,p}(g) +\sum_{\ell=1}^{p-1}{p\choose \ell}\bigg(\frac1r V_{n,1,(p-\ell)r}(g)+\frac1qV_{n,1,\ell q}(h)\bigg),
\end{align*}
where we have used the notation~\eqref{Vptng} in the final step. It is easy to see that our assumptions on $g$ and $h$ imply that $V_{n,1,s}(g)\to0$ and $V_{n,1,\ell q}(h)\to0$ as $n\uparrow\infty$ for all $s>1$ and each $\ell\ge1$. This proves the assertion.
\end{proof}

\begin{proof}[Proof of Theorem~\ref{signed var thm}] Our assumption $|\alpha|>1/b$ clearly implies that the odd integer $q$ is larger than or equal to $3$. By dropping the absolute values in Lemma~\ref{lemma 1} and its proof, we see that for 
 $n\in\mathbb{N}$ and $(Y_m)_{m\in\mathbb N}$ as in~\eqref{R and Y},
 \begin{equation}\label{lemma subs eq}
 \sum_{k=0}^{b^n-1}\big(f((k+1)b^{-n})-f(kb^{-n})\big)^q= (\text{sgn}\,\alpha)^n\mathbb{E}\bigg[\Big(\sum_{m=1}^{n}(\alpha b)^{-m}Y_m\Big)^q\bigg].
 \end{equation}
 Proposition~\ref{skew prop} and Lemma~\ref{Bernoulli lemma} yield that the expectation on the right-hand side vanishes asymptotically  if and only if $b$ is even and $\ell=b/2$. In this case, the limit on the left-hand side of~\eqref{lemma subs eq}
 exists and is zero. In all other cases, the expectation on the right-hand side of~\eqref{lemma subs eq}
  will converge to $\mathbb E[Z^q]\neq0$, and  so~\eqref{signed qth variation} and~\eqref{nonexist signed qth variation}
 hold for $t=1$. The case of a general $t\in[0,1]$ then follows basically as in the proof of Theorem~\ref{thm}. One only needs to replace Lemma~\ref{additive p-var lemma} with Lemma~\ref{signed additive p-var lemma} and to consider the sequences of odd and even $n$ separately for $\alpha<0$.
\end{proof}

\parskip-0.5em\renewcommand{\baselinestretch}{0.9}\normalsize
\bibliography{CTbook}{}
\bibliographystyle{plain}
\end{document}